\newtheorem{theorem}{Theorem}[section]
\newtheorem{lemma}[theorem]{Lemma}
\newtheorem{prop}[theorem]{Proposition}
\newtheorem{deflemma}[theorem]{Definition and Lemma}
\newtheorem{cor}[theorem]{Corollary}
\theoremstyle{definition}
\newtheorem{definition}[theorem]{Definition}
\newtheorem{example}[theorem]{Example}
\theoremstyle{remark}
\numberwithin{equation}{section}
\newcommand{\Cal}[1]{{\mathcal #1}}
\newcommand{\End}{\mbox{\rm End}}
\newcommand{\Hom}{\mbox{\rm Hom}}
\newcommand{\ann}{\mbox{\rm ann}}
\newcommand{\Ann}{\mbox{\rm Ann}}
\newcommand{\coker}{\mbox{\rm coker}}
\newcommand{\Z}{\mathbb{Z}}
\newcommand{\f}{\frac}
\newcommand{\bF}{ \mathbb{F} }
\newcommand{\bP}{ \mathbb{P} }
\newcommand{\bQ}{ \mathbb{Q} }
\newcommand{\bZ}{ \mathbb{Z} }
\newcommand{\cE}{ \mathcal{E} }
\newcommand{\cF}{ \mathcal{F} }
\newcommand{\sL}{ \mathsf{L} }
\newcommand{\fp}{ \mathfrak{p} }
\newcommand{\fP}{ \mathfrak{P} }
\newcommand{\lbar}{ \overline }
\newcommand{\enumequivi}{%
  \renewcommand{\theenumi}{(\alph{enumi})}%
  \renewcommand{\labelenumi}{\theenumi}%
}
\begin{document}

 \title[Cyclically presented modules, projective covers and factorizations]{Cyclically presented modules, projective covers \protect \\ and factorizations}
    \author[Alberto Facchini]{Alberto Facchini}
    \address{Dipartimento di Matematica, Universit\`a di Padova, 35121 Padova, Italy}
   \email{facchini@math.unipd.it}
\thanks{Partially
supported by Universit\`a di Padova (Progetto di ricerca di Ateneo CPDA105885/10 ``Differential graded categories''
 and Progetto ex 60\% ``Anelli e categorie di moduli'') and Fondazione Cassa di Risparmio di Padova e Rovigo (Progetto di Eccellenza ``Algebraic structures and their applications''.)}
 
\author[Daniel Smertnig]{Daniel Smertnig}
\address{Institut f\"ur Mathematik und Wissenschaftliches Rechnen \\
Karl-Franzens-Uni\-ver\-si\-t\"at Graz \\
Heinrichstra\ss e 36\\
8010 Graz, Austria}
\email{daniel.smertnig@uni-graz.at}
\thanks{The second author is supported by the Austrian Science Fund (FWF), W1230.}

\author[Nguyen Khanh Tung]{Nguyen Khanh Tung}
    \address{Dipartimento di Matematica, Universit\`a di Padova, 35121 Padova, Italy}
   \email{nktung@math.unipd.it}

\subjclass[2010]{16D40, 16D80, 16D99, 16S50.}

\date{}

\begin{abstract} We investigate projective covers of cyclically presented modules, characterizing the rings over which every cyclically presented module has a projective cover as the rings $R$ that are Von Neumann regular modulo their Jacobson radical $J(R)$ and in which idempotents can be lifted modulo $J(R)$. Cyclically presented modules naturally appear in the study of factorizations of elements in non-necessarily commutative integral domains. One of the possible applications is to the modules $M_R$ whose endomorphism ring $E:=\End(M_R)$ is Von Neumann regular modulo $J(E)$ and in which idempotents lift modulo~$J(E)$. \end{abstract}
    \maketitle

\section{Introduction}

An $R$-module $M_R$ is said to be \emph{cyclically presented} if $M_R \cong R/aR$ for some $a \in R$. In this paper, we study some natural connections between cyclically presented $R$-modules, their submodules, their projective covers and factorizations of elements in the ring $R$. That is, we find some results on projective covers of cyclically presented modules and apply them to the study of factorizations of elements in a ring. In this way, we are naturally led to the class of $2$-firs. Recall that a ring $R$ is a $2$-fir if every right ideal of $R$ generated by at most $2$ elements is free of unique rank. This condition is right/left symmetric, and a ring $R$ is a $2$-fir if and only
 if it is a domain and the sum of any two principal right ideals with non-zero intersection is again a principal right ideal \cite[Theorem 1.5.1]{Cohn '85}. P.~M.~Cohn investigated factorization of elements in $2$-firs, applying the Artin-Schreier Theorem and the Jordan-H\"older-Theorem to the corresponding cyclically presented modules \cite{Cohn '85}. One of the main ideas developed in this paper is  to characterize the submodules of a cyclically presented module $M_R$ that, under a suitable cyclic presentation $\pi_M\colon R_R \to M_R$, lift to principal right ideals of $R$ that are generated by a left cancellative element (Lemmas~\ref{square}, \ref{piexact} and~\ref{cuo}). The key role is played by a class of cyclically presented submodules of a cyclically presented module $M_R$, which we call {\em $\pi_M$-exact submodules} of $M_R$. We show (Theorem~\ref{3.8}) that,  for every cyclically presented right $R$-module $M_R$ and every cyclic presentation $\pi_M\colon R_R \to M_R$ with non-zero kernel, the set of all cyclically presented $\pi_M$-exact submodules is closed under finite sums if and only if   $R$ is a $2$-fir. As we have said above, when sums and intersections of exact submodules are again exact submodules, we can use the Artin-Schreier and the Jordan-H\"older Theorems to study factorizations of elements. 

We also study the rings over which every cyclically presented module has a projective cover. We characterize these rings as the rings $R$ that are Von Neumann regular modulo their Jacobson radical $J(R)$ and in which idempotents can be lifted modulo $J(R)$ (Theorem~\ref{VNR}). Finally, in the last Section, we consider the modules $M_R$ whose endomorphism rings $E$ are Von Neumann regular modulo the Jacobson radical $J(E)$ and in which idempotents can be lifted modulo $J(E)$. In particular, this applies to the case in which the module $M_R$ in question is quasi-projective (Lemma~\ref{lemma:endo} and Proposition~\ref{vhlpp}). 

 Throughout the paper, $R$ will be an associative ring with identity $1_R\ne 0_R$ and we will denote by $U(R)$ its group of invertible elements. By an $R$-module, we always mean a unitary right $R$-module. 

\section{Generalities}

Let $R$ be a ring.  An element $a \in R$ is \emph{left cancellative} if, for all $b,c \in R$, $ab = ac$ implies $b=c$. Equivalently, $a \in R$ is left cancellative if it is non-zero and is not a left zero-divisor. A (non-necessarily commutative) ring $R$ is a \emph{domain} if every non-zero element is left cancellative (equivalently, if every non-zero element is right cancellative).
If $a\in R$, the right $R$-module homomorphism $\lambda_a\colon R_R \to aR, x \mapsto ax$, is an isomorphism if and only if $a$ is left cancellative. More precisely, $aR \cong R_R$ if and only if there exists a left cancellative element $a' \in R$ with $a'R=aR$. If $a, a' \in R$ are two left cancellative elements, then $aR = a'R$ if and only if $a = a'\varepsilon$ for some $\varepsilon \in U(R)$.

Let $a ,x_1, \ldots, x_n \in R \setminus U(R)$ be $n+1$ left cancellative elements and assume that $a = x_1 \cdot\ldots\cdot x_n$. If $\varepsilon_1,\ldots,\varepsilon_{n-1} \in U(R)$, then obviously also $a = (x_1 \varepsilon_1) \cdot (\varepsilon_1^{-1} x_2 \varepsilon_2) \cdot\ldots\cdot (\varepsilon_{n-1}^{-1}x_n)$. This gives an equivalence relation on finite ordered sequences of left cancellative elements whose product is $a$. More precisely, if $F_a:=\{\,(x_1, \ldots, x_n)\mid n\ge 1,\ x_i\in R \setminus U(R)$ is left cancellative for every $i=1,2,\dots,n$ and $a = x_1 \cdot\ldots\cdot x_n\,\}$, then the equivalence relation $\sim$ on $F_a$ is defined by $(x_1, \ldots, x_n)\sim (x'_1, \ldots, x_m')$ if $n=m$ and there exist $\varepsilon_1,\ldots,\varepsilon_{n-1} \in U(R)$ such that $x'_1=x_1 \varepsilon_1$, $x'_i=\varepsilon_{i-1}^{-1} x_i \varepsilon_i$ for all $i=2,\dots,n-1$ and $x'_n=\varepsilon_{n-1}^{-1}x_n$. In this paper, we call an equivalence class of $F_a$ modulo $\sim$ a \emph{factorization of $a$ up to insertion of units}. Notice that the factors need not be irreducible. When this causes no confusion, we will simply call a representative of such an equivalence class a factorization.

  A factorization $a = x_1 \cdot\ldots\cdot x_n$ gives rise to an ascending chain of principal right ideals, generated by left cancellative elements and containing $aR$:
\[
  aR \;\subsetneq\; x_1\cdot\ldots\cdot x_{n-1}R \;\subsetneq\; \ldots \;\subsetneq\; x_1R \subsetneq R,
\]
hence to an ascending chain of cyclically presented submodules \[
 0= aR/aR \;\subsetneq\; x_1\cdot\ldots\cdot x_{n-1}R/aR \;\subsetneq\; \ldots \;\subsetneq\; x_1R/aR \subsetneq R/aR
\]
of the cyclically presented $R$-module $R/aR$. Notice that $x_1\cdot\ldots\cdot x_{i-1}R/aR\cong R/x_i\cdot\ldots\cdot x_nR$ is cyclically presented because the elements $x_i$ are left cancellative.

The next lemma shows that, conversely, every chain of principal right ideals generated by left cancellative elements in $aR \subset R$, determines a factorization of $a$ into left cancellative elements, which is unique up to insertion of units.

\begin{lemma} \label{lemma:fact-chain}
  Let $a \in R$ be a left cancellative element, $aR = y_nR \subsetneq y_{n-1 }R \subsetneq \ldots \subsetneq y_{1} R \subsetneq y_0R = R$ be an ascending chain of principal right ideals of $R$, where $y_1,\ldots,y_{n-1} \in R$ are left cancellative elements, $y_0=1$ and $y_n=a$.
  For every $i =1,\ldots,n,$ let $x_i \in R$ be such that $y_{i-1}x_i = y_i$.
  Then $x_1, \ldots, x_{n}$ are left cancellative elements and $a= x_1 \cdot\ldots\cdot x_n$.

  Moreover, if $y_1', \ldots, y_{n-1}' \in R$ are also left cancellative elements with $y_i'R = y_iR$, $y_0'=1$ and $y_n'=a$, and we similarly define $x_i'$ by $y'_{i-1}x_i' = y_i' $ for every $i=1,2,\dots,n$, then there exist $\varepsilon_1,\ldots,\varepsilon_{n-1} \in U(R)$ such that $x'_1=x_1 \varepsilon_1$, $x'_i=\varepsilon_{i-1}^{-1} x_i \varepsilon_i$ for all $i=2,\dots,n-1$ and $x'_n=\varepsilon_{n-1}^{-1}x_n$.
\end{lemma}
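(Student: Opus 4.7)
The plan is to handle the two assertions in sequence: first establish the factorization $a = x_1 \cdots x_n$ together with the left cancellativity of each $x_i$, and then derive the uniqueness formulas by invoking the observation, already recalled in the Generalities section, that two left cancellative generators of the same principal right ideal differ on the right by a unit.

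For the factorization part, I would argue by telescoping. Starting from $y_n = a$ and repeatedly applying the defining relation $y_i = y_{i-1} x_i$, one obtains $a = y_n = y_{n-1} x_n = y_{n-2} x_{n-1} x_n = \cdots = y_0 x_1 \cdots x_n$, and since $y_0 = 1$ this is the claimed factorization. For left cancellativity of $x_i$, suppose $x_i b = x_i c$ with $b, c \in R$; left-multiplying by $y_{i-1}$ gives $y_i b = y_i c$, and now one cancels $y_i$ on the left using the hypothesis that each of $y_1, \ldots, y_{n-1}$ and $y_n = a$ is left cancellative, concluding $b = c$.

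For the uniqueness part, since $y_i$ and $y_i'$ are both left cancellative and generate the same principal right ideal $y_i R = y_i' R$, the preliminary observation yields a (uniquely determined) unit $\varepsilon_i \in U(R)$ with $y_i' = y_i \varepsilon_i$. The boundary conditions $y_0 = y_0' = 1$ and $y_n = y_n' = a$, together with this uniqueness, force $\varepsilon_0 = 1$ and $\varepsilon_n = 1$. Substituting $y_i' = y_i \varepsilon_i$ and $y_{i-1}' = y_{i-1} \varepsilon_{i-1}$ into the defining relation $y_{i-1}' x_i' = y_i'$ gives $y_{i-1} \varepsilon_{i-1} x_i' = y_{i-1} x_i \varepsilon_i$, and cancelling $y_{i-1}$ on the left (which is permitted for every $i = 1, \ldots, n$) yields the uniform formula $x_i' = \varepsilon_{i-1}^{-1} x_i \varepsilon_i$. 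Reading this off at the endpoints $i = 1$ (where $\varepsilon_0 = 1$) and $i = n$ (where $\varepsilon_n = 1$) recovers precisely the expressions stated in the lemma.

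The main obstacle is essentially bookkeeping: one must carefully track that the boundary units $\varepsilon_0$ and $\varepsilon_n$ are forced to be $1$, so that the uniform conjugation formula collapses at the endpoints into the one-sided forms $x_1' = x_1 \varepsilon_1$ and $x_n' = \varepsilon_{n-1}^{-1} x_n$ required by the statement. The only substantive input is the uniqueness-up-to-units of left cancellative generators of a principal right ideal, which is already in hand.
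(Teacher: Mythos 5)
Your proposal is correct and follows essentially the same route as the paper: telescoping for the factorization, left-multiplying by $y_{i-1}$ and cancelling the left cancellative $y_i$ for the cancellativity of $x_i$, and the uniqueness-up-to-units of left cancellative generators for the second part. The only cosmetic difference is that you set $\varepsilon_0=\varepsilon_n=1$ to get a uniform conjugation formula, whereas the paper handles the cases $i=1$ and $i=n$ by separate explicit computations.
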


\begin{proof}
Assume that $x_i$ is not left cancellative for some $i=1,2,\dots,n$. Then there exists $b \ne 0$ such that $x_i b=0$. Therefore $y_{i}b=y_{i-1} x_i b=0$. This is a contradiction because $y_{i}$ is left cancellative. Notice that $a=y_{n-1}x_n=y_{n-2}x_{n-1}x_n=\ldots =y_0x_1\ldots x_{n}=x_1\ldots x_n$.

Now if $y'_iR=y_iR$ for every $i=1,\dots,n-1$, then there exists $\varepsilon_1,\ldots,\varepsilon_{n-1} \in U(R)$ such that $y'_i=y_i \varepsilon_{i}$. Therefore $y'_{i-1}x'_i=y_{i-1}x_i\varepsilon_i=y'_{i-1}\varepsilon_{i-1}^{-1}x_i\varepsilon_i$. But $y'_{i-1}$ is left cancellative, so that $x'_i=\varepsilon_{i-1}^{-1}x_i\varepsilon_i$ for every $i=2,\dots,n-1$.

Moreover, $y_1=y_0x_1=x_1$ and, similarly, $y_1'= x_1'$, so that $y'_1=y_1 \varepsilon_1$ implies $x'_1=x_1 \varepsilon_1$.
Finally, $y_{n-1}x_n = y_n=a=y_n'=y_{n-1}'x_n'=y_{n-1} \varepsilon_{n-1}x_n'$. Thus $x_n= \varepsilon_{n-1}x_n'$ and $x_n'= \varepsilon_{n-1}^{-1}x_n$.
\end{proof}

As we have already said in the introduction, we will characterize, in Lemmas~\ref{piexact} and~\ref{cuo}, the submodules of cyclically presented modules $M_R$ that, under a suitable cyclic presentation $\pi\colon R_R \to M_R$, that is, a suitable epimorphism $\pi\colon R_R \to M_R$, lift to principal right ideals of $R$ generated by a left cancellative element.
The following lemma will prove to be helpful to this end.

\begin{lemma}\label{square}
  Let $A_R, B_R, M_R, N_R$ be modules over a ring $R$, $\pi_M\colon A_R \to M_R$ and $\pi_N\colon B_R \to N_R$ be epimorphisms, $\lambda\colon B_R\to A_R$ be a homomorphism and $\varepsilon\colon N_R \to M_R$ be a monomorphism such that $\pi_M \lambda = \varepsilon \pi_N$, so that there is a commutative diagram
$$\begin{array}{ccc} B_R & {\stackrel{\lambda}{\longrightarrow}} & A_R\\  {\scriptstyle \pi_N}\downarrow\phantom  {\scriptstyle \pi_N} & & \phantom{\scriptstyle \pi_M}\downarrow  {\scriptstyle \pi_M} \\  N_R &{\stackrel{\varepsilon}{\hookrightarrow}} 
& \phantom{.}M_R.\end{array}$$
 Then the following three conditions are equivalent:
    \begin{enumerate}
      \item[{\rm (a)}] \label{square:a} $\pi_M^{-1}(\varepsilon(N_R)) = \lambda(B_R)$.
      \item[{\rm (b)}] \label{square:b} $\lambda(\ker(\pi_N)) = \ker(\pi_M)$.
      \item[{\rm (c)}] \label{square:c} $\pi_M$ induces an isomorphism $\coker(\lambda) \to \coker(\varepsilon)$.
    \end{enumerate}

 If, moreover, $A_R', B_R'$ are right $R$-modules such that there exist isomorphisms $\varphi_A\colon A_R' \to A_R$ and $\varphi_B\colon B_R' \to B_R$, and one defines $\pi_N' := \pi_N \varphi_B$, $\pi_M' := \pi_M \varphi_A$ and $\lambda' := \varphi_A^{-1} \lambda \varphi_B$, then the three conditions {\rm (a)}, {\rm (b)} and {\rm (c)}  are equivalent also to the  the three conditions 
 \begin{enumerate}
      \item[{\rm (d)}] \label{square:d} $(\pi'_M)^{-1}(\varepsilon(N_R)) = \lambda'(B'_R)$.
      \item[{\rm (e)}] \label{square:e} $\lambda'(\ker(\pi'_N)) = \ker(\pi'_M)$.
      \item[{\rm (f)}] \label{square:f} $\pi'_M$ induces an isomorphism $\coker(\lambda') \to \coker(\varepsilon)$.
     \end{enumerate}
\end{lemma}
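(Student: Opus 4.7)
The plan is a straightforward diagram chase, with the only subtlety being to keep track of which inclusions come for free from commutativity/surjectivity and which need the stated hypothesis.

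First I would establish the two universally valid facts that encode the data of the square. Since $\pi_M \lambda = \varepsilon \pi_N$ and $\pi_N$ is surjective, one has $\pi_M(\lambda(B_R)) = \varepsilon(N_R)$, and because $\pi_M$ is surjective this refines to the identity $\pi_M^{-1}(\varepsilon(N_R)) = \lambda(B_R) + \ker(\pi_M)$. On the other hand, because $\varepsilon$ is a monomorphism, a chase gives $\lambda^{-1}(\ker(\pi_M)) = \ker(\pi_N)$, hence $\lambda(\ker(\pi_N)) = \lambda(B_R) \cap \ker(\pi_M)$. With these two identities in hand, the equivalence (a)$\Leftrightarrow$(b) is immediate: (a) amounts to $\ker(\pi_M) \subseteq \lambda(B_R)$, and (b) likewise amounts to $\ker(\pi_M) \subseteq \lambda(B_R)$.

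Next I would deal with (a)$\Leftrightarrow$(c). Commutativity of the square gives a well-defined induced map $\bar{\pi}_M\colon \coker(\lambda) \to \coker(\varepsilon)$, and it is surjective because $\pi_M$ is. Its kernel is $\pi_M^{-1}(\varepsilon(N_R))/\lambda(B_R)$, which vanishes precisely when (a) holds; combined with surjectivity this is (c).

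For the primed part, I would first verify that the primed square is genuinely of the same type: $\pi'_N$ and $\pi'_M$ are epimorphisms as composites of an epimorphism with an isomorphism, and commutativity $\pi'_M \lambda' = \varepsilon \pi'_N$ reduces, after cancelling $\varphi_A \varphi_A^{-1}$, to the commutativity of the original square. Therefore by the case already established, (d), (e) and (f) are equivalent among themselves. It then remains to bridge the two diagrams by a single equivalence, and (a)$\Leftrightarrow$(d) is the cleanest choice: since $\varphi_B$ is surjective, $\lambda'(B'_R) = \varphi_A^{-1}(\lambda(B_R))$, and $(\pi'_M)^{-1}(\varepsilon(N_R)) = \varphi_A^{-1}(\pi_M^{-1}(\varepsilon(N_R)))$, so applying the bijection $\varphi_A$ converts (d) into (a).

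There is no real obstacle; the only place to be slightly careful is in the identity $\lambda^{-1}(\ker \pi_M) = \ker \pi_N$ used for (a)$\Leftrightarrow$(b), which is where the injectivity of $\varepsilon$ is genuinely needed, and in making sure that in proving (a)$\Leftrightarrow$(c) the induced map $\bar{\pi}_M$ is built without assuming (a) already.
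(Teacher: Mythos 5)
Your proof is correct. The overall architecture matches the paper's: you reduce (a) and (b) to the single condition $\ker(\pi_M)\subseteq\lambda(B_R)$ (the paper obtains the same identity $\pi_M^{-1}(\varepsilon(N_R))=\lambda(B_R)+\ker(\pi_M)$ and then does the (a)$\Rightarrow$(b) direction by an element chase using injectivity of $\varepsilon$, whereas you package that chase once and for all as $\lambda(\ker(\pi_N))=\lambda(B_R)\cap\ker(\pi_M)$, which is slightly cleaner and makes the symmetry visible). The one genuinely different step is the cokernel condition: the paper proves (b)$\Leftrightarrow$(c) by applying the Snake Lemma to the map of short exact sequences, extracting $0\to\coker(\lambda|_{\ker})\to\coker(\lambda)\to\coker(\varepsilon)\to 0$, while you prove (a)$\Leftrightarrow$(c) by directly identifying the kernel of the induced epimorphism $\coker(\lambda)\to\coker(\varepsilon)$ as $\pi_M^{-1}(\varepsilon(N_R))/\lambda(B_R)$. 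Your route is more elementary (no Snake Lemma needed) and links (c) to (a) rather than to (b); the paper's route has the advantage of exhibiting the cokernel of $\lambda|_{\ker}$ explicitly, which is conceptually closer to condition (b). For the primed statement, you bridge via (a)$\Leftrightarrow$(d) where the paper bridges via (b)$\Leftrightarrow$(e); both transports under $\varphi_A$ are equally immediate. No gaps.
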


\begin{proof}
  (a)${}\Leftrightarrow{}$(b): 
  We have $\pi_M\lambda(B_R)=\varepsilon\pi_N(B_R)= \varepsilon(N_R)$.
  It follows that $\pi_M^{-1}(\varepsilon(N_R)) = \lambda(B_R)+\ker\pi_M$. Thus (a) is equivalent to $\ker\pi_M\subseteq \lambda(B_R)$. The inclusion $\lambda(\ker(\pi_N)) \subseteq \ker(\pi_M)$ always holds by the commmutativity of the diagram, so that $b$ is equivalent to $\ker(\pi_M)\subseteq\lambda(\ker(\pi_N))$. Thus (b)${}\Rightarrow{}$(a) is trivial. Conversely, if (a) holds, and $a\in\ker(\pi_M)$, then $a=\lambda(b)$ for some $b\in B_R$, so that $0=\pi_M(a)=\pi_M\lambda(b)=\varepsilon\pi_N(b)$. But $\varepsilon$ is mono, so $\pi_N(b)=0$, and $a=\lambda(b)\in\lambda(\ker(\pi_N))$.

  (b)${}\Leftrightarrow{}$(c)
    Apply the Snake Lemma to the diagram
    \[
      \xymatrix{ 0\ar[r] & \ker(\pi_N)\ar[r]\ar[d]\ar[d]^{\lambda|_{\ker}} & B_R\ar[d]^{\lambda} \ar[r]^{\pi_N} & N_R \ar[d]_{\varepsilon} \ar[r]& 0\\ 
      0\ar[r] & \ker(\pi_M)\ar[r] & A_R \ar[r]_{\pi_M} & M_R \ar[r]& 0,}
    \]
    obtaining a short exact sequence
    \[
    \xymatrix{ 0=\ker(\varepsilon)\ar[r] & \coker({\lambda|_{\ker}})\ar[r] & \coker({\lambda}) \ar[r] & \coker({\varepsilon})\ar[r] & 0.}
     \]
    Therefore $\lambda(\ker(\pi_N))=\ker(\pi_M)$ if and only if $\lambda|_{\ker}$ is surjective, if and only if $\coker({\lambda|_{\ker}})=0$, if and only if the epimorphism $\coker(\lambda)\to\coker(\varepsilon)$ is injective, if and only if it is an isomorphism.

  Now assume that there exist isomorphisms $\varphi_A\colon A_R' \to A_R$ and $\varphi_B\colon B_R' \to B_R$ and set $\pi_N' := \pi_N \varphi_B$, $\pi_M' := \pi_M \varphi_A$ and $\lambda' := \varphi_A^{-1} \lambda \varphi_B$. To conclude the proof, it suffices to show that $\lambda(\ker(\pi_N)) = \ker(\pi_M)$ if and only if $\lambda'(\ker(\pi_N')) = \ker(\pi_M')$. This is true, since $\ker(\pi_M') = \varphi_A^{-1}(\ker(\pi_M))$ and
    \[
    \lambda'(\ker(\pi_N')) = \lambda'(\varphi_B^{-1}(\ker(\pi_N))) = \varphi_A^{-1} \lambda \varphi_B (\varphi_B^{-1}(\ker(\pi_N))) = \varphi_A^{-1}(\lambda(\ker(\pi_N))). \qedhere
    \]
\end{proof}

\section{$\pi$-exactness}

Let $M_R$ be a cyclically presented right $R$-module and $\pi_M\colon R_R \to M_R$ a cyclic presentation. We introduce the notion of $\pi_M$-exactness to characterize those submodules of $M_R$ that lift, via $\pi_M$, to principal right ideals of $R$, generated by a left cancellative element of $R$. We give sufficient conditions on $R$ for this notion to be independent from the chosen presentation $\pi_M$.

\begin{deflemma}[$\pi$-exactness]
  \label{piexact}
  Let $N_R \le M_R$ be cyclic right $R$-modules. Let $F_R \cong R_R$, fix an epimorphism $\pi_M\colon F_R \to M_R$ and let $\varepsilon\colon N_R \hookrightarrow M_R$ denote the embedding.
  The following conditions are equivalent:
  \begin{enumerate}
    \tolerance=900 % otherwise we have lots of overfull hboxes here
    \enumequivi

  \item\label{piexact:a} $\pi_M^{-1}(N_R) \cong R_R$.

  \item\label{piexact:b}
      There exists a monomorphism $\lambda\colon R_R \to F_R$ and an epimorphism $\pi_N\colon R_R \to N_R$ such that $\lambda(\ker(\pi_N)) = \ker(\pi_M)$ and the following diagram commutes:
    \begin{equation}
      \begin{gathered}
      \xymatrix{ R_R \ar[r]^{\lambda} \ar[d]_{\pi_N} &
      F_R \ar[d]^{\pi_M} \\
      N_R \ar[r]_{\varepsilon} & M_R. }
      \label{eq:piexact-1}
      \end{gathered}
    \end{equation}

  \item \label{piexact:c} There exists a monomorphism $\lambda\colon R_R \to F_R$ and an epimorphism $\pi_N\colon R_R \to N_R$ such that diagram \eqref{eq:piexact-1} commutes and induces an isomorphism $\coker(\lambda) \to \coker(\varepsilon)$.
  \end{enumerate}

  If these equivalent conditions are satisfied, we call $N_R$ a \emph{$\pi_M$-exact submodule} of~$M_R$.
\end{deflemma}

\begin{proof}
  \ref*{piexact:a}${}\Rightarrow{}$\ref*{piexact:b}.
  By \ref*{piexact:a}, there exists an isomorphism $\lambda_0\colon R_R \to \pi_M^{-1}(N_R)$. Let $\lambda$ be the composite mapping $R_R \xrightarrow{\lambda_0} \pi_M^{-1}(N_R) \hookrightarrow F_R$ and $\varepsilon^{-1}\colon \varepsilon(N_R)\to N_R$ be the inverse of the corestriction of $\varepsilon$ to $\varepsilon(N_R)$. Noticing that $\pi_M\lambda(R_R)=\varepsilon(N_R)$, one gets an onto mapping  $\pi_N := \varepsilon^{-1} \pi_M \lambda\colon R_R \to N_R$.
  Then diagram \eqref{eq:piexact-1} clearly commutes and $\lambda(R_R) = \pi_M^{-1}(N_R)$. The statement now follows from Lemma \ref*{square}.

  \ref*{piexact:b}${}\Leftrightarrow{}$\ref*{piexact:c} and \ref*{piexact:b}${}\Rightarrow{}$\ref*{piexact:a}. By Lemma \ref*{square}.
\end{proof}

\begin{cor}
  Let $F_R \cong R_R$ and let $\pi_M\colon F_R \to M_R$ be an epimorphism. If $\varphi\colon F'_R \to F_R$ is an isomorphism and $N_R \le M_R$, then $N_R$ is a $\pi_M$-exact submodule of $M_R$ if and only if it is a $\pi_M \varphi$-exact submodule of $M_R$. 
\end{cor}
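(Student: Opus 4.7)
The plan is to verify condition \ref*{piexact:a} of Definition and Lemma \ref{piexact}, which characterizes $\pi$-exactness purely in terms of the preimage of $N_R$, without reference to any auxiliary lifting data. Because $\varphi\colon F'_R\to F_R$ is an isomorphism and $F_R\cong R_R$, we automatically have $F'_R\cong R_R$, so the hypothesis required even to speak of $\pi_M\varphi$-exactness of $N_R$ is satisfied.

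The key observation is the set-theoretic identity
\[
  (\pi_M\varphi)^{-1}(N_R)=\varphi^{-1}\bigl(\pi_M^{-1}(N_R)\bigr).
\]
Since $\varphi^{-1}\colon F_R\to F'_R$ is an isomorphism of right $R$-modules, its restriction to the submodule $\pi_M^{-1}(N_R)$ is an isomorphism onto $(\pi_M\varphi)^{-1}(N_R)$. Consequently, $(\pi_M\varphi)^{-1}(N_R)\cong R_R$ if and only if $\pi_M^{-1}(N_R)\cong R_R$, and by \ref*{piexact:a} this is exactly the claimed equivalence.

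No real obstacle arises; the argument is simply the transport of the preimage along the isomorphism $\varphi$. Alternatively, one could deduce the result by invoking the ``moreover'' clause of Lemma \ref{square} with $A'_R:=F'_R$, $A_R:=F_R$, $\varphi_A:=\varphi$, and $B'_R:=B_R:=R_R$, $\varphi_B:=\mathrm{id}_{R_R}$, starting from any pair $(\lambda,\pi_N)$ witnessing $\pi_M$-exactness of $N_R$ via condition \ref*{piexact:b}; this yields $\pi'_M=\pi_M\varphi$ and $\lambda':=\varphi^{-1}\lambda$, and the equivalence of \ref*{piexact:b} with its primed analogue gives the corollary at once.
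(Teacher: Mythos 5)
Your proposal is correct. Your primary argument differs from the paper's: the paper verifies condition (b) of Definition and Lemma \ref{piexact}, taking a monomorphism $\lambda$ witnessing $\pi_M$-exactness and invoking the ``moreover'' clause of Lemma \ref{square} (with $\varphi_A=\varphi$, $\varphi_B=1_R$) to conclude that $\lambda':=\varphi^{-1}\lambda$ satisfies $\lambda'(\ker(\pi_N))=\ker(\pi_M\varphi)$, then handles the converse separately by applying the same argument to $\varphi^{-1}$. You instead verify condition (a) directly via the identity $(\pi_M\varphi)^{-1}(N_R)=\varphi^{-1}\bigl(\pi_M^{-1}(N_R)\bigr)$ and the fact that the isomorphism $\varphi$ restricts to an isomorphism between the two preimages, so that one is $\cong R_R$ exactly when the other is. Your route is slightly more economical: it needs no choice of auxiliary lifting data $(\lambda,\pi_N)$ and yields both implications simultaneously rather than by a symmetry argument. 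The paper's route has the minor advantage of explicitly exhibiting the transported lifting $\lambda'$, a pattern reused in later results, but that is not needed for this corollary. Your second paragraph is essentially the paper's proof verbatim, so you have in effect given both arguments.
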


\begin{proof}
  Let $N_R$ be a $\pi_M$-exact submodule of $M_R$ and let $\lambda\colon R_R \to F_R$ be a monomorphism satisfying condition \ref{piexact:b} of Definition and Lemma \ref{piexact}. Apply Lemma \ref*{square} to $B_R = B_R' = R_R$, $A_R = F_R$, $A_R' = F_R'$, $\varphi_B = 1_R$  and $\varphi_A = \varphi$. Setting $\lambda' :=\varphi^{-1}\lambda$, it follows that $\lambda'(\ker(\pi_N)) = \ker(\pi_M \varphi)$ and hence $N_R$ is a $\pi_M \varphi$-exact submodule of $M_R$. The converse follows applying what we have just shown to $\varphi^{-1}$.
\end{proof}

\begin{cor}  \label{cor:quo-cp}
  Let $N_R \le M_R$ be cyclic $R$-modules, $\pi_M\colon R_R \to M_R$ be an epimorphism and $N_R \le M_R$ be a $\pi_M$-exact submodule.
  Then $M_R/N_R$ is cyclically presented with presentation induced by $\pi_M$.
\end{cor}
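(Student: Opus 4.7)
The plan is to read the result directly off condition (c) of Definition and Lemma \ref{piexact}. Since $N_R$ is a $\pi_M$-exact submodule of $M_R$, there exists a monomorphism $\lambda\colon R_R \to R_R$ and an epimorphism $\pi_N\colon R_R \to N_R$ making the square \eqref{eq:piexact-1} commute and inducing an isomorphism $\coker(\lambda) \to \coker(\varepsilon)$. Identifying $\coker(\varepsilon)$ with $M_R/N_R$ in the obvious way, this already says $M_R/N_R \cong \coker(\lambda)$, and it remains only to observe that $\coker(\lambda)$ is cyclically presented.

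The key point is that a monomorphism $\lambda\colon R_R \to R_R$ is multiplication by a left cancellative element. Namely, set $a := \lambda(1_R)$, so that $\lambda(x) = ax$ for every $x \in R$; then $\lambda$ injective is exactly the condition that $a$ is left cancellative (cf.\ the discussion at the beginning of Section~2). Therefore $\lambda(R_R) = aR$ and $\coker(\lambda) = R/aR$, which is cyclically presented by definition.

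Finally, to check that the presentation is the one \emph{induced by $\pi_M$}, I would note that condition \ref{piexact:a} yields $\pi_M^{-1}(N_R) = \lambda(R_R) = aR$. Hence the composite epimorphism $R_R \xrightarrow{\pi_M} M_R \twoheadrightarrow M_R/N_R$ has kernel exactly $aR$, and factors through an isomorphism $R/aR \to M_R/N_R$, which is the presentation claimed. There is no significant obstacle here; the corollary is essentially the translation of the diagrammatic condition in Lemma \ref{piexact}\ref*{piexact:c} into the language of cyclic presentations, once one remembers that monomorphisms $R_R \to R_R$ correspond to left cancellative elements.
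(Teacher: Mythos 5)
Your proof is correct and follows the same route as the paper: the paper likewise invokes condition (c) of Definition and Lemma \ref{piexact} to get $M_R/N_R \cong R_R/\lambda(R_R)$ and concludes immediately, since $\lambda(R_R)=aR$ for $a=\lambda(1)$. Your additional remarks (that $a$ is left cancellative and that condition (a) identifies the kernel of the induced presentation as $\pi_M^{-1}(N_R)$) only make explicit what the paper leaves to the reader.
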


\begin{proof}
  Let $\lambda\colon R_R \to R_R$ be as in condition (c) of Definition and Lemma \ref{piexact}.
  Then $M_R / N_R \cong R_R / \lambda(R_R)$, from which the conclusion follows immediately.
\end{proof}

\begin{cor} \label{cor:transitive}
  Let $N_R \le M_R \le P_R$ be cyclic $R$-modules and let $\pi_P\colon F_R \to P_R$ be an epimorphism, where $F_R \cong R_R$.
  If $M_R \le P_R$ is $\pi_P$-exact and $N_R \le M_R$ is $\pi_P|_{\pi_P^{-1}(M_R)}$-exact, then $N_R \le P_R$ is $\pi_P$-exact.
\end{cor}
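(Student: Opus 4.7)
The plan is to prove the corollary directly via characterization~\ref{piexact:a} of Definition and Lemma~\ref{piexact}: $N_R \le M_R$ is $\pi_M$-exact if and only if $\pi_M^{-1}(N_R) \cong R_R$. This reformulation turns transitivity into a nearly formal statement, so almost no real work is needed beyond an identity on preimages.

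First I would set $G_R := \pi_P^{-1}(M_R) \subseteq F_R$. Since $M_R \le P_R$ is $\pi_P$-exact, condition~\ref{piexact:a} gives $G_R \cong R_R$, so that the restriction $\pi_P|_{G_R}\colon G_R \to M_R$ is a well-defined epimorphism whose source is free of rank one (its image is $\pi_P(\pi_P^{-1}(M_R))=M_R$ because $\pi_P$ is surjective). In particular, the notion of $\pi_P|_{G_R}$-exactness is legitimately defined. Applying the second hypothesis, that $N_R \le M_R$ is $\pi_P|_{G_R}$-exact, and invoking~\ref{piexact:a} a second time, I obtain $(\pi_P|_{G_R})^{-1}(N_R) \cong R_R$.

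The last step is the elementary identity $(\pi_P|_{G_R})^{-1}(N_R) = \pi_P^{-1}(N_R)$, which holds because $N_R \subseteq M_R$ forces $\pi_P^{-1}(N_R) \subseteq \pi_P^{-1}(M_R) = G_R$. Hence $\pi_P^{-1}(N_R) \cong R_R$, and a third application of~\ref{piexact:a} establishes that $N_R \le P_R$ is $\pi_P$-exact. No serious obstacle arises: once $\pi$-exactness is reformulated as ``the preimage is cyclic free,'' the corollary becomes transitivity of that property along a chain of nested preimages.
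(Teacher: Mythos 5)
Your proof is correct and follows essentially the same route as the paper: both arguments reduce everything to condition (a) of Definition and Lemma \ref{piexact}, observe that $\pi_P^{-1}(M_R)\cong R_R$ so the restricted presentation makes sense, and then use the identity $(\pi_P|_{\pi_P^{-1}(M_R)})^{-1}(N_R)=\pi_P^{-1}(N_R)$. You merely spell out the inclusion $\pi_P^{-1}(N_R)\subseteq\pi_P^{-1}(M_R)$ that the paper leaves implicit.
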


\begin{proof}
  Set $F'_R := \pi_P^{-1}(M_R)$. By condition \ref{piexact:a} of Definition and Lemma \ref{piexact}, $F'_R \cong R_R$.
  Therefore the notion of $\pi_P|_{F'_R}$-exactness of $N_R$ in $M_R$ is indeed defined.
  Since $\pi_P^{-1}(N_R) = (\pi_P|_{F'_R})^{-1}(N_R) \cong R_R$, the claim follows.
\end{proof}

Let $c \in R$ be left cancellative and denote by $\sL(cR, R)$ the set of all right ideals $aR$ with $a \in R$ left cancellative and $cR \subset aR \subset R$. It is partially ordered by set inclusion. Let $\pi\colon R \to R/cR$ be an epimorphism. Denote by $\sL_{\pi}(R/cR)$ the set of all $\pi$-exact submodules of $R/cR$. This set is also partially ordered by set inclusion.

\begin{lemma} \label{lemma:bij-posets}
  Let $c \in R$ be left cancellative and let $\pi\colon R_R \to R/cR$ be the canonical epimorphism. Then $\pi$ induces an isomorphism of partially ordered sets $\sL(cR, R) \cong \sL_{\pi}(R/cR)$.
\end{lemma}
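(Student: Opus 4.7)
The plan is to show that the maps
\[
 \Phi\colon \sL(cR,R)\to \sL_\pi(R/cR),\ aR\mapsto \pi(aR)=aR/cR,\qquad \Psi\colon \sL_\pi(R/cR)\to \sL(cR,R),\ N\mapsto \pi^{-1}(N),
\]
are well defined, mutually inverse, and inclusion preserving. Once this is established, the conclusion is immediate.

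For $\Phi$ to be well defined, I would take $aR\in \sL(cR,R)$, observe that $a$ left cancellative gives $aR\cong R_R$ via $\lambda_a$, and note that $cR\subseteq aR$ implies $\pi^{-1}(aR/cR)=aR$. Hence $\pi^{-1}(aR/cR)\cong R_R$, and condition \ref{piexact:a} of Definition and Lemma \ref{piexact} yields that $aR/cR$ is a $\pi$-exact submodule of $R/cR$. For $\Psi$ to be well defined, I would start with a $\pi$-exact submodule $N\le R/cR$; again by condition \ref{piexact:a}, $\pi^{-1}(N)\cong R_R$. Using the characterization recalled in the Generalities section (namely, that a principal right ideal is free of rank one over $R_R$ if and only if it is generated by a left cancellative element), we can write $\pi^{-1}(N)=a'R$ for some left cancellative $a'\in R$. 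Since $cR=\ker\pi\subseteq\pi^{-1}(N)$, we have $cR\subseteq a'R\subseteq R$, so $\pi^{-1}(N)\in\sL(cR,R)$.

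To check $\Psi\Phi=\mathrm{id}$, note that for $aR\in \sL(cR,R)$,
\[
 \Psi\Phi(aR)=\pi^{-1}(\pi(aR))=aR+\ker\pi=aR+cR=aR,
\]
the last equality because $cR\subseteq aR$. For $\Phi\Psi=\mathrm{id}$, surjectivity of $\pi$ gives $\Phi\Psi(N)=\pi(\pi^{-1}(N))=N$. Both $\Phi$ and $\Psi$ are inclusion preserving as a direct image under a surjection and an inverse image, respectively, so they define an isomorphism of partially ordered sets.

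The only nontrivial input is the equivalence between ``$\pi^{-1}(N)\cong R_R$'' and ``$\pi^{-1}(N)$ is generated by a left cancellative element''; this is precisely the observation in the Generalities section about $\lambda_a$. Apart from this, every step is bookkeeping about direct images and preimages under the canonical projection, so I do not expect any serious obstacle.
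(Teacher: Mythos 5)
Your proof is correct and follows essentially the same route as the paper: the paper's own proof simply notes that $N_R$ is $\pi$-exact iff $\pi^{-1}(N_R)\cong R_R$ iff $\pi^{-1}(N_R)=aR$ for a left cancellative $a$, invoking condition (a) of Definition and Lemma \ref{piexact} and the observation about $\lambda_a$ from the Generalities section, exactly as you do. You merely make explicit the routine verification (mutually inverse, inclusion-preserving maps $\Phi$ and $\Psi$) that the paper leaves implicit.
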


\begin{proof}
  It suffices to show that $N_R \subset R/cR$ is $\pi$-exact if and only if there exists a left cancellative $a \in R$ with $\pi^{-1}(N_R) = aR$. But this is equivalent to $\pi^{-1}(N_R) \cong R_R$. The statement now follows from condition Definition and Lemma \ref{piexact:a} of  \ref{piexact}.
\end{proof}

The following example shows that, in general, the condition of $\pi$-exactness indeed depends on the particular choice of the epimorphism $\pi\colon R_R \to M_R$.
We refer the reader to any of \cite{Maclachlan-Reid '03}, \cite{Reiner '75} or \cite{Vigneras '80} for the necessary background on quaternion algebras.

\begin{example}{\rm 
  Let $A$ be a quaternion algebra over $\bQ$ and $R$ be a maximal $\bZ$-order in $A$ such that there exists an unramified prime ideal $\fP \subset R$ and maximal right ideals $I, J$ of $R$ with $I, J \supset \fP$, $I$ principal and $J$ non-principal.
  Then $\fp = \fP \cap \bZ$ is principal, say $\fp = p\bZ$ with $p \in \bP$, $\fP = pR$,  $R / \fP \cong M_2(\bF_p)$ and $\fP = \Ann(R/\fP)$.
  (E.g., take $A = \left( \f{-1,-11}{\bQ} \right)$, $R = {}_\bZ \langle 1, i, \f{1}{2}(i+j), \f{1}{2}(1+k) \rangle$, $p=3$, $I = {}_\bZ \langle \f{1}{2}(1+5k), \f{1}{2}(i + 5j), 3j, 3k \rangle$ and $J = {}_\bZ \langle \f{1}{2}(1 + 2j + 3k), \f{1}{2}(i + 3j + 4k), 3j, 3k \rangle$).

  The module $R/\fP$ has a composition series (as an $R/\fP$- and hence as an $R$-module)
  \[
     0 \subsetneq I/\fP \subsetneq R/\fP,
  \]
  and there exists an isomorphism $R/\fP \to R/\fP$ mapping $J/\fP$ to $I/\fP$, as is easily seen from $R/\fP \cong M_2(\bF_p)$. 
  Therefore there exist epimorphisms $\pi_M\colon R \to R/\fP$ and $\pi_M'\colon R \to R/\fP$ with $\pi_M^{-1}(I/\fP) = I$ and $\pi_M'^{-1}(I/\fP) = J$. This implies that $I/\fP$ is a $\pi_M$-exact submodule of $R/ \fP$ that is not $\pi_M'$-exact.}
\end{example}

However, under an additional assumption on $R_R$, which holds, for instance, whenever $R$ is a semilocal ring, the notion is independent of the choice of $\pi$. 

\begin{lemma}
  Suppose that $R_R \oplus K_R \cong R_R \oplus R_R$ implies $K_R \cong R_R$ for all right ideals $K_R$ of $R$.
  \begin{enumerate}
    \item If $M_R \cong R/aR$ with $a \in R$ left cancellative and $\pi_M\colon R_R \to M_R$ is an epimorphism, then there exists a left cancellative $a' \in R$ such that $\ker(\pi_M) = a'R$.
    \item If $M_R$ is a cyclic $R$-module, $\pi_M\colon R_R \to M_R$ and $\pi_M'\colon R_R \to M_R$ are epimorphisms and $N_R \le M_R$, then $N_R$ is a $\pi_M$-exact submodule of $M_R$ if and only if it is a $\pi_M'$-exact submodule of $M_R$.
  \end{enumerate}
\end{lemma}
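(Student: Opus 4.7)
The plan is to reduce both parts to Schanuel's Lemma combined with the cancellation hypothesis, using the characterization \ref{piexact:a} of Definition and Lemma \ref{piexact}: namely, $N_R$ is $\pi$-exact precisely when $\pi^{-1}(N_R) \cong R_R$.

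For part (1), I would compare the two short exact sequences
\[
0 \to aR \to R_R \to M_R \to 0 \qquad \text{and} \qquad 0 \to \ker(\pi_M) \to R_R \to M_R \to 0,
\]
the first arising from the isomorphism $M_R \cong R/aR$. Since the middle terms are projective, Schanuel's Lemma produces an isomorphism $aR \oplus R_R \cong \ker(\pi_M) \oplus R_R$. Because $a$ is left cancellative, the map $\lambda_a\colon R_R \to aR$ is an isomorphism, and hence $R_R \oplus R_R \cong \ker(\pi_M) \oplus R_R$. The cancellation hypothesis on $R$ then applies to the right ideal $\ker(\pi_M)$ and yields $\ker(\pi_M) \cong R_R$. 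This makes $\ker(\pi_M)$ a principal right ideal, and by the characterization recalled at the start of Section~2, any principal right ideal $bR$ with $bR \cong R_R$ equals $a'R$ for some left cancellative $a' \in R$.

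For part (2), given $N_R \le M_R$ and epimorphisms $\pi_M, \pi_M'\colon R_R \to M_R$, I would compose with the canonical projection $M_R \to M_R/N_R$ to obtain the two short exact sequences
\[
0 \to \pi_M^{-1}(N_R) \to R_R \to M_R/N_R \to 0 \qquad \text{and} \qquad 0 \to \pi_M'^{-1}(N_R) \to R_R \to M_R/N_R \to 0.
\]
Schanuel's Lemma then supplies $\pi_M^{-1}(N_R) \oplus R_R \cong \pi_M'^{-1}(N_R) \oplus R_R$. If $N_R$ is $\pi_M$-exact, then $\pi_M^{-1}(N_R) \cong R_R$, and the isomorphism becomes $\pi_M'^{-1}(N_R) \oplus R_R \cong R_R \oplus R_R$; the cancellation hypothesis, applied to the right ideal $\pi_M'^{-1}(N_R)$, then gives $\pi_M'^{-1}(N_R) \cong R_R$, i.e.\ $N_R$ is $\pi_M'$-exact. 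The converse is symmetric.

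No step presents a real obstacle. The only point worth care, more a matter of setup than difficulty, is the choice in part (2) of presenting $N_R$ through the quotient $M_R/N_R$ rather than through $N_R$ itself: this places $R_R$ in the middle term of both sequences, so that Schanuel's Lemma (needing projective middle terms) and the cancellation hypothesis (concerning right ideals) can both be applied in the form required.
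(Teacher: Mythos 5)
Your proposal is correct and follows essentially the same route as the paper: both parts are handled by comparing the two short exact sequences with projective middle term $R_R$, applying Schanuel's Lemma, and then invoking the cancellation hypothesis, with part (2) passing through the quotient $M_R/N_R$ exactly as the paper does.
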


\begin{proof}
(1) Let $\pi_{aR}\colon R_R \to R/aR, 1 \mapsto 1 + aR$ be the canonical epimorphism. Since $a$ is left cancellative, $aR \cong R_R$.
      Consider the exact sequences
      \[
         0 \to aR \hookrightarrow R_R \xrightarrow{\pi_{aR}} R/aR \to  0
      \]
      and
      \[
         0 \to \ker(\pi_M) \hookrightarrow R_R \xrightarrow{\pi_M} R/aR \to  0.
      \]
      By Schanuel's Lemma, $R_R \oplus aR \cong R_R \oplus \ker(\pi_M)$, and hence by assumption $aR \cong \ker(\pi_M)$. Thus there exists a left cancellative $a' \in R$ with $\ker(\pi_M) = a'R$.

(2) Let $\pi_{M/N}\colon M_R \to M_R/N_R$ be the canonical quotient module epimorphism.
      There are exact sequences
      \[
         0 \to \pi_M^{-1}(N_R) \to R_R \xrightarrow{\pi_{M/N} \pi_M} M_R/N_R \to  0
      \]
      and
      \[
         0 \to \pi_M'^{-1}(N_R) \to R_R \xrightarrow{\pi_{M/N} \pi_M'} M_R/N_R \to  0,
      \]
      and by Schanuel's Lemma therefore $R_R \oplus \pi_M^{-1}(N_R) \cong R_R \oplus \pi_M'^{-1}(N_R)$.
        If $N_R$ is a $\pi_M$-exact submodule of $M_R$, then $\pi_M^{-1}(N_R) \cong R_R$ and hence $\pi_M'^{-1}(N_R) \cong R_R$ by our assumption on $R$, showing that $N_R$ is a $\pi_M'$-exact submodule. The converse follows by symmetry.
\end{proof}

Suppose that $R$ has invariant basis number (for all $m,n \in \mathbb N_0$, $R_R^m \cong R_R^n$ implies $m = n$).
Then the condition of the previous lemma is satisfied if every stably free $R$-module of rank $1$ is free \cite[\S11.1.1]{McConnell-Robson '01}. This is true if $R$ is commutative \cite[\S11.1.16]{McConnell-Robson '01}.
The condition is also true if $R$ is semilocal \cite[Corollary 4.6]{Facchini '98} or $R$ is a $2$-fir (by \cite[Theorem 1.1(e)]{Cohn '85}).

Let $M_R$ be a right $R$-module with an epimorphism  $\pi_M\colon R_R \to M_R$ with $\ker(\pi_M) = aR$ and $a \in R$ left cancellative.
We say that a finite series
\[
   0 = M_0 \subset M_1 \subset M_2 \subset \ldots \subset M_n = M_R
\]
of submodules is {\em $\pi_M$-exact}, if every $M_i$ is an $\pi_M|_{\pi_M^{-1}(M_{i+1})}$-exact submodule of $M_{i+1}$. By Lemma \ref{lemma:bij-posets} the $\pi_M$-exact series of submodules of $R$ are in bijection with series of principal right ideals in $\sL(aR, R)$. By Lemma \ref{lemma:fact-chain} they are therefore in bijection with factorizations of $a$ into left cancellative elements, up to insertion of units.

\medskip

Recall that a ring $R$ is a $2$-fir if and only if it is a domain and the sum of any two principal right ideals with non-zero intersection is again a principal right ideal \cite[Theorem 1.5.1]{Cohn '85}. In the next theorem, we will consider, for a cyclically presented right $R$-module $M_R$ and a cyclic presentation $\pi_M\colon R_R \to M_R$ with non-zero kernel, the set of all submodules of cyclically presented $\pi_M$-exact submodules. We say it is {\em closed under finite sums} if for every two cyclically presented $\pi_M$-exact submodules $M_1$ and $M_2$ of $M_R$, the sum $M_1 + M_2$ also is cyclically presented and a $\pi_M$-exact submodule of $M_R$. 

\begin{theorem}\label{3.8}
  Let $R$ be a domain. The following are equivalent.
  \begin{enumerate}
    \item For every cyclically presented right $R$-module $M_R$ and every cyclic presentation $\pi_M\colon R_R \to M_R$ with non-zero kernel, the set of all cyclically presented $\pi_M$-exact submodules is closed under finite sums.    
    \item $R$ is a $2$-fir.
  \end{enumerate}
\end{theorem}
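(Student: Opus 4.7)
The plan is to invoke the bijective correspondence of Lemma~\ref{lemma:bij-posets}: cyclically presented $\pi_M$-exact submodules of $M_R$ correspond, via $\pi_M^{-1}$, to principal right ideals of $R$ containing $\ker(\pi_M)$ and generated by left cancellative elements. Both implications then reduce to keeping careful track of cancellativity and principality.

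For (2)${}\Rightarrow{}$(1), I assume $R$ is a $2$-fir. Given any cyclic presentation $\pi_M\colon R_R \to M_R$ with non-zero kernel, the invariance lemma following the Example applies (every stably free right $R$-module of rank $1$ is free in a $2$-fir, by \cite[Theorem~1.1(e)]{Cohn '85}), so $\ker(\pi_M) = a'R$ for some left cancellative $a' \in R$. If $N_1, N_2 \le M_R$ are cyclically presented $\pi_M$-exact submodules, then each $\pi_M^{-1}(N_i) \cong R_R$; since any right ideal of a domain isomorphic to $R_R$ has the form $cR$ with $c$ left cancellative (take $c$ to be the image of $1$ under the isomorphism), one has $\pi_M^{-1}(N_i) = c_iR$ with $c_i$ left cancellative and $a'R \subseteq c_iR$. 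Then $\pi_M^{-1}(N_1 + N_2) = c_1R + c_2R$ and $0 \ne a' \in c_1R \cap c_2R$, so the $2$-fir hypothesis gives $c_1R + c_2R = cR$ for some $c \in R$. Since $c_1 \in cR$, $c$ is nonzero and hence left cancellative; thus $\pi_M^{-1}(N_1 + N_2) \cong R_R$, showing that $N_1 + N_2$ is $\pi_M$-exact. Finally, writing $a' = cd$ with $d \in R$ (left cancellative since $a'$ and $c$ are), one obtains $N_1 + N_2 \cong cR/a'R \cong R/dR$, which is cyclically presented.

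For (1)${}\Rightarrow{}$(2), I assume (1); since $R$ is already a domain, I need only show that $b_1R + b_2R$ is principal whenever $b_1R, b_2R$ are principal right ideals with $b_1R \cap b_2R \ne 0$. Pick a nonzero $a \in b_1R \cap b_2R$ and use the canonical cyclic presentation $\pi\colon R_R \to R/aR$, whose kernel $aR$ is nonzero. Since each $b_i$ is left cancellative, $b_iR \cong R_R$ and $N_i := b_iR/aR$ is $\pi$-exact; writing $a = b_ix_i$ with $x_i$ left cancellative gives $N_i \cong R/x_iR$, hence cyclically presented. By (1), $N_1 + N_2$ is a cyclically presented $\pi$-exact submodule, so $b_1R + b_2R = \pi^{-1}(N_1 + N_2) \cong R_R$. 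Any isomorphism $\phi\colon R_R \to b_1R + b_2R$ satisfies $\phi(r) = \phi(1)r$, so $b_1R + b_2R = \phi(1)R$ is principal. Cohn's characterization \cite[Theorem~1.5.1]{Cohn '85} then yields that $R$ is a $2$-fir.

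The one place that requires genuine care, rather than being a formal translation, is the verification that $N_1 + N_2$ is cyclically presented (not merely cyclic and $\pi_M$-exact) in direction (2)${}\Rightarrow{}$(1); this forces the appeal to the invariance lemma to ensure that $\ker(\pi_M)$ is itself principal with left cancellative generator, so that the isomorphism $N_1 + N_2 \cong cR/a'R$ can be rewritten as $R/dR$ with $d$ left cancellative.
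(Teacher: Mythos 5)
Your argument is correct and follows essentially the same route as the paper's proof: both directions reduce, via the correspondence of Lemma~\ref{lemma:bij-posets} between $\pi_M$-exact submodules and principal right ideals generated by left cancellative elements, to Cohn's characterization of $2$-firs as domains in which the sum of two principal right ideals with non-zero intersection is principal. Your explicit appeal to the invariance lemma to get $\ker(\pi_M)=a'R$ with $a'$ left cancellative, so that $N_1+N_2\cong cR/a'R\cong R/dR$ is genuinely cyclically presented, is a worthwhile extra step: the paper asserts this conclusion in the direction (2)${}\Rightarrow{}$(1) without spelling out the justification.
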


\begin{proof}
  (1) $\Rightarrow$ (2):
  Let $a,b,c \in R\setminus \{0\}$ be such that $cR \subset aR \cap bR$. We have to show that $aR + bR$ is right principal. Let $M_R = R/cR$, $\pi_M\colon R_R \to R/cR$ be the canonical epimorphism, $M_1 = aR / cR$ and $M_2 = bR / cR$. By Lemma \ref{lemma:bij-posets}, $M_1 = \pi_M(aR)$ and $M_2 = \pi_M(bR)$ are $\pi_M$-exact submodules of $M_R$. By assumption $M_1 + M_2$ is a $\pi_M$-exact submodule of $M_R$. Again by Lemma \ref{lemma:bij-posets}, $aR + bR = \pi_M^{-1}(M_1 + M_2)$ is a principal right ideal of $R$, generated by a left cancellative element.

  \smallskip
  (2) $\Rightarrow$ (1):
  We may assume $M_1, M_2 \ne  0$, as the statement is trivial otherwise.
  Let $\pi_M\colon R_R \to M_R$ be an epimorphism with non-zero kernel.
  Since $M_1$ and $M_2$ are $\pi_M$-exact submodules of $M_R$, there exist $a,b \in R\setminus \{0\}$ such that $\pi^{-1}(M_1) = aR$ and $\pi^{-1}(M_2) = bR$. Because $\ker(\pi) \ne  0$, we have $aR \cap bR \ne  0$. Since $R$ is a $2$-fir, there exists $c \in R\setminus \{0 \}$ such that $aR + bR = \pi_M^{-1}(M_1 + M_2) = cR$. Therefore $M_1 + M_2$ is cyclically presented and a $\pi_M$-exact submodule of $M_R$.
\end{proof}

Notice that if we assume that sums and intersections of exact submodules are again exact submodules, one may use the Artin-Schreier and Jordan-H\"older-Theorems to study factorizations of elements. As we have just seen, such an assumption leads to the $2$-firs investigated by Cohn in \cite{Cohn '85}.

\section{Projective covers of cyclically presented modules}

Let $R$ be a ring and $R/xR$ a cyclically presented right $R$-module, $x\in R$. The module $R/xR$ does not have a projective cover in general, but if it has one, it has one of the form $\pi|_{eR}\colon eR\to R/xR$, where $e\in R$ is an idempotent that depends on $x$ and $\pi|_{eR}$ is the restriction to $eR$ of the canonical projection $\pi\colon R_R\to R/xR$ \cite[Lemma~17.17]{AF2}. More precisely, given any projective cover $p\colon P_R\to R/xR$, there is an isomorphism $f\colon eR\to P_R$ such that $pf=\pi|_{eR}$. The kernel of the projective cover $\pi|_{eR}\colon eR\to R/xR$ is $eR\cap xR$ and is contained in $eJ(R)$ because the kernel of $\pi|_{eR}$ is a superfluous submodule of $eR$ and $eJ(R)$ is the largest superfluous submodule of $eR$.  Considering the exact sequences $0\to xR\to R_R\to R/xR\to 0$ and $0\to eR\cap xR\to eR\to R/xR\to 0$, one sees that $R_R\oplus (eR\cap xR)\cong eR\oplus xR$ (Schanuel's Lemma), so that $eR\cap xR$ can be generated with at most two elements.

Recall that every right $R$-module has a projective cover if and only if the ring $R$ is perfect, and that every finitely generated right $R$-module has a projective cover if and only every simple right $R$-module has a projective cover, if and only if the ring $R$ is semiperfect. Denoting by $J(R)$ the Jacobson radical of $R$, $R$ is semiperfect if and only if $R/J(R)$ is semisimple and idempotents can be lifted modulo $J(R)$ \cite[Theorem 27.6]{AF2}.
The next result gives a similar characterization for the rings $R$ over which every cyclically presented right module has a projective cover.

\begin{theorem}\label{VNR} The following conditions are equivalent for a ring $R$ with Jacobson radical $J(R)$:

{\rm (1)} Every cyclically presented right $R$-module has a projective cover.

{\rm (2)} The ring $R/J(R)$ is Von Neumann regular and idempotents can be lifted modulo $J(R)$.\end{theorem}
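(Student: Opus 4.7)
The plan is to prove both implications directly; throughout, write $\overline{R}:=R/J(R)$ and $\bar{a}$ for the image in $\overline{R}$ of $a\in R$. For $(2)\Rightarrow(1)$, given $x\in R$ I would use von Neumann regularity of $\overline{R}$ to produce an idempotent $\bar{g}\in\overline{R}$ with $\bar{x}\overline{R}=\bar{g}\overline{R}$, then lift it to an idempotent $g\in R$ by hypothesis. Setting $f:=1-g$, I would check that $\pi|_{fR}\colon fR\to R/xR$ is a projective cover. Surjectivity follows from $\bar{f}\overline{R}+\bar{x}\overline{R}=(1-\bar{g})\overline{R}+\bar{g}\overline{R}=\overline{R}$, which gives $fR+xR+J(R)=R$ and hence $fR+xR=R$ by Nakayama's lemma. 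For the kernel, any $a\in fR\cap xR$ reduces to an element of $\bar{f}\overline{R}\cap\bar{x}\overline{R}=0$, so $a\in J(R)$ and $fR\cap xR\subseteq fR\cap J(R)=fJ(R)$, which is the largest superfluous submodule of the cyclic projective module $fR$.

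For $(1)\Rightarrow(2)$, I would first show $\overline{R}$ is von Neumann regular. Given $\bar{x}\in\overline{R}$, lift to $x\in R$ and let $\pi|_{fR}\colon fR\to R/xR$ be a projective cover with $f$ idempotent. Since $\ker(\pi|_{fR})\subseteq fJ(R)$, the projective cover induces an isomorphism $\bar{f}\overline{R}\cong fR/fJ(R)\to R/(xR+J(R))\cong\overline{R}/\bar{x}\overline{R}$. Hence $\overline{R}/\bar{x}\overline{R}$ is projective, the exact sequence $0\to\bar{x}\overline{R}\to\overline{R}\to\overline{R}/\bar{x}\overline{R}\to 0$ splits, and $\bar{x}\overline{R}$ is a direct summand of $\overline{R}$, hence generated by an idempotent.

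The main work lies in lifting an arbitrary idempotent $\bar{e}\in\overline{R}$. Pick any lift $e\in R$, form the cyclically presented module $R/(1-e)R$ and take its projective cover $\pi|_{fR}\colon fR\to R/(1-e)R$ with $f$ idempotent. The same argument yields $\bar{f}\overline{R}\cong\overline{R}/(1-\bar{e})\overline{R}\cong\bar{e}\overline{R}$ (the second isomorphism using that $\bar{e}$ is idempotent), from which one extracts elements $\bar{u}\in\bar{e}\overline{R}\bar{f}$ and $\bar{v}\in\bar{f}\overline{R}\bar{e}$ with $\bar{u}\bar{v}=\bar{e}$ and $\bar{v}\bar{u}=\bar{f}$. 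Lifting to $u\in Rf$ and $v\in fR$ (by replacing arbitrary lifts $u_0,v_0$ with $u_0f$ and $fv_0$), the element $vu$ lies in $fRf$ and reduces to the identity $\bar{f}$ of $fRf/fJ(R)f$, hence is a unit in $fRf$. Setting $w:=(vu)^{-1}\in fRf$, the computation $(uwv)^2=u(wvu)wv=ufwv=uwv$ (using $wvu=f$ and $uf=u$), together with $\overline{uwv}=\bar{u}\bar{f}\bar{v}=\bar{u}\bar{v}=\bar{e}$, shows that $uwv$ is an idempotent of $R$ lifting $\bar{e}$. The main obstacle is precisely this construction: the naive attempt using the element $fr$ from the surjectivity identity $fr+(1-e)s=1$ only yields an element with square-minus-self in $J(R)$, i.e., an idempotent merely modulo $J(R)$, which would circularly reduce the problem to itself. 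Inverting $vu$ inside the corner ring $fRf$ is what genuinely uses the full isomorphism $\bar{f}\overline{R}\cong\bar{e}\overline{R}$ supplied by the projective cover, not just the fact that $\bar{e}\overline{R}$ is generated by an idempotent.
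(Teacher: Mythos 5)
Your proof is correct. The direction $(2)\Rightarrow(1)$ and the Von Neumann regularity half of $(1)\Rightarrow(2)$ follow essentially the same lines as the paper: the paper obtains the isomorphism $eR/eJ(R)\cong R/(xR+J(R))$ by applying $-\otimes_R R/J(R)$ to $0\to eR\cap xR\to eR\to R/xR\to 0$ and using that the superfluous kernel lies in $eJ(R)$, which is exactly the content of your direct computation, and its proof that $\pi|_{(1-e)R}$ is a projective cover in $(2)\Rightarrow(1)$ is identical to yours. Where you genuinely diverge is the lifting of idempotents in $(1)\Rightarrow(2)$: the paper invokes the equivalence from Anderson--Fuller (Proposition 27.4) reducing the problem to showing that every direct summand of $(R/J(R))_R$ has a projective cover, and then produces such a cover by composing the hypothesized projective cover of $R/(1-g)R$ with the superfluous epimorphism $R/(1-g)R\to M_R$; you instead lift the idempotent by hand, extracting $\bar{u},\bar{v}$ from the isomorphism $\bar{f}\overline{R}\cong\bar{e}\overline{R}$, inverting $vu$ in the corner ring $fRf$ (where $f+J(fRf)$ is the identity of $fRf/fJ(R)f$, so $vu$ is a unit), and checking that $uwv$ is an idempotent lift. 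Your computation is sound — in effect you reprove, in this special case, the standard lemma that an idempotent of $R/J(R)$ whose image module is isomorphic to $\bar{f}\overline{R}$ for an idempotent $f$ of $R$ can be lifted — so your argument is more self-contained, at the cost of the explicit $u,v,w$ manipulation, while the paper's is shorter but leans on the cited equivalence and on the closure of superfluous epimorphisms under composition.
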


\begin{proof}
  Set $J := J(R)$.
  
  (1)${}\Rightarrow{}$(2) Assume that every cyclically presented right $R$-module has a projective cover. In order to show that $R/J$ is Von Neumann regular, it suffices to prove that every principal right ideal of $R/J$ is a direct summand of the right $R/J$-module $R/J$ \cite[Theorem~1.1]{Goodearl '91}. Let $x$ be an element of $R$. We will show that $(xR+J)/J$ is a direct summand of $R/J$ as a right $R/J$-module. By (1),  the cyclically presented right $R$-module $R/xR$ has a projective cover. As we have seen above, the projective cover is of the form $\pi|_{eR}\colon eR\to R/xR$ for some idempotent $e$ of $R$, where $\pi\colon R_R\to R/xR$ is the canonical projection.
  
  Applying the right exact functor $-\otimes_RR/J$ to the short exact sequence $0\to eR\cap xR\to eR\to R/xR\to 0$, we get an exact sequence $(eR\cap xR)\otimes_RR/J\to eR\otimes_RR/J\to R/xR\otimes_RR/J\to 0$, which can be rewritten as $(eR\cap xR)/(eR\cap xR)J\to eR/eJ\to R/(xR+J)\to 0$. It follows that there is a short exact sequence $0\to ((eR\cap xR)+eJ)/eJ\to eR/eJ\to R/(xR+J)\to 0$. Now the kernel $eR\cap xR$ of the projective cover $\pi|_{eR}$ is superfluous in 
$eR$ and $eJ$ is the largest superfluous submodule of $eR$, hence $((eR\cap xR)+eJ)/eJ=0$ and $eR/eJ\cong R/(xR+J)$. 

Now $(e+J)(R/J)=(eR+J)/J\cong eR/(eR\cap J)=eR/eJ$, so that $eR/eJ\cong R/(xR+J)$ is a projective right $R/J$-module. Thus the short exact sequence $0\to(x+J)(R/J)=(xR+J)/J\to R/J\to R/(xR+J)\to 0$ splits, and the principal right ideal of $R/J$ generated by $x+J$ is a direct summand of the right $R/J$-module $R/J$. 

We must now prove that idempotents of $R/J$ lift modulo $J$. By \cite[Proposition 27.4]{AF2}, this is equivalent to showing that every direct summand of the $R$-module $R/J$ has a projective cover. Let $M_R$ be a direct summand of $(R/J)_R$. Then it is also a direct summand of $(R/J)_{R/J}$ and hence is generated by an idempotent of $R/J$.
Let $g \in R$ be such that $g+J \in R/J$ is idempotent and $M_{R/J} = (g + J)(R/J)$. Then $R/J = (g + J)(R/J) \oplus (1 - g + J)(R/J)$ as $R/J$-modules, and hence also as $R$-modules. The canonical projection $\pi_g\colon R/J \to M_R$ has kernel $\ker(\pi_g) = (1-g + J)(R/J)$. Let $\pi\colon R_R \to R/J, r \mapsto r +J$ be the canonical epimorphism. Set $f: = \pi_g \pi$. Then $\ker(f) = (1-g)R + J$ and so $f$ factors through an epimorphism $\lbar{f}\colon R / (1-g)R \to M_R$ with $\ker(\lbar{f}) = (J + (1-g)R) / (1-g)R$. In particular, $\ker(\lbar{f})$ is the image of the superfluous submodule $J$ of $R_R$ via the canonical projection $R_R\to R/(1-g)R$. It follows that $\ker(\lbar{f})$ is superfluous in  $R/(1-g)R$, i.e.,  $\lbar f$ is a superfluous epimorphism.

By hypothesis, there is a projective cover $p\colon P_R \to R/(1-g)R$.
Since the composite mapping of two superfluous epimorphisms is a superfluous epimorphism (this follows easily from \cite[Corollary 5.15]{AF2}), $\lbar{f} p\colon P_R \to M_R$ is a superfluous epimorphism and hence a projective cover of $M$.

(2)${}\Rightarrow{}$(1) Assume that (2) holds. Let $R/xR$ be a cyclically presented right $R$-module, where $x\in R$. The principal right ideal $(x+J)(R/J)$ of the Von Neumann regular ring $R/J$ is generated by an idempotent and idempotents can be lifted modulo $J$. Hence there exists an idempotent element $e\in R$ such that $(x+J)(R/J)=(e+J)(R/J)$. Let $\pi|_{(1-e)R}$ be the restriction to $(1-e)R$ of the canonical epimorphism $\pi\colon R_R\to R/xR$. We claim that $\pi|_{(1-e)R}\colon (1-e)R\to R/xR$ is onto. To prove the claim, notice that $xR+J=eR+J$, so that $(1-e)R+xR+J=R$. As $J$ is superfluous in $R_R$, it follows that $(1-e)R+xR=R$ and so $\pi|_{(1-e)R}$ is onto. This proves our claim. Finally, $\ker(\pi|_{(1-e)R})=(1-e)R\cap xR\subseteq ((1-e)R+J)\cap(xR+J)=((1-e)R+J)\cap(eR+J)\subseteq J$, so that $\ker(\pi|_{(1-e)R})\subseteq J\cap (1-e)R=(1-e)J$ is superfluous in $(1-e)R$. Thus $\pi|_{(1-e)R}$ is the required projective cover of the cyclically presented $R$-module $R/xR$.
\end{proof}

\begin{cor} \label{cor:local-cov}
  If $R$ is a domain and every cyclically presented right $R$-module has a projective cover, then $R$ is local. 
\end{cor}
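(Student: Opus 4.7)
The plan is to apply Theorem~\ref{VNR} and then exploit the domain hypothesis to strip $R/J(R)$ down to a division ring.

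First I would invoke Theorem~\ref{VNR}: the hypothesis that every cyclically presented right $R$-module has a projective cover gives at once that $R/J(R)$ is Von Neumann regular and that idempotents lift modulo $J(R)$. The goal then becomes to show that $R/J(R)$ is a division ring, which is equivalent to $R$ being local.

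Next I would show that $R/J(R)$ has only the trivial idempotents $\bar{0}$ and $\bar{1}$. Since idempotents lift modulo $J(R)$, any idempotent $\bar{e} \in R/J(R)$ is the image of an idempotent $e \in R$. But $R$ is a domain: from $e^2 = e$ we get $e(e-1) = 0$, hence $e = 0$ or $e = 1$. Consequently $\bar{e} \in \{\bar 0, \bar 1\}$.

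Finally, I would use the standard fact that a Von Neumann regular ring with only trivial idempotents is a division ring: for every $a \in R/J(R)$ one has $aR/J(R) = \bar e\, R/J(R)$ for some idempotent $\bar e$, so either $a = 0$ or $a$ is right invertible; the symmetric argument using left principal ideals shows $a$ is also left invertible when $a \neq 0$. Hence $R/J(R)$ is a division ring and $R$ is local. I do not expect a real obstacle here; the only point to be careful about is the lifting step, where the domain hypothesis on $R$ (rather than on $R/J(R)$) is exactly what forces all idempotents in the quotient to be trivial.
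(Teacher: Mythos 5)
Your proposal is correct and follows essentially the same route as the paper: invoke Theorem~\ref{VNR}, use idempotent lifting together with the domain hypothesis to conclude that $R/J(R)$ has only trivial idempotents, and then observe that a Von Neumann regular ring with only trivial idempotents is a division ring, so $R$ is local. Your version merely spells out the steps that the paper's proof leaves implicit.
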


\begin{proof}
  By the previous Theorem, $R/J(R)$ is Von Neumann regular. Since idempotents lift modulo $J(R)$, the only idempotents of $R/J(R)$ are $0$ and $1$. Therefore $R/J(R)$ is a division ring and so $R$ is local\end{proof}

Notice that, conversely, if $R$ is a local ring and $M_R$ is  any non-zero cyclic module, then every epimorphism $\pi\colon R_R \to M_R$ is a projective cover.

\begin{lemma}\label{cuo} Let $R$ be an arbitrary ring, let $N_R\le M_R$ be cyclic right $R$-modules with a projective cover and let $\varepsilon\colon N_R \to M_R$ be the embedding.
  Then the following two conditions are equivalent:
  
  \begin{enumerate}
    \item There exist a projective cover $\pi_N\colon P_R\to N_R$ of $N_R$, a projective cover $\pi_M\colon Q_R\to M_R$ of $M_R$ and a commutative diagram of right $R$-module morphisms
    \begin{equation}
      \begin{gathered}
        \xymatrix{ P_R \ar[r]^{\lambda} \ar[d]_{\pi_N} &
        Q_R \ar[d]^{\pi_M} \\
        N_R \ar[r]_{\varepsilon} & M_R, }
        \label{1'}
      \end{gathered}
    \end{equation}
    such that the following equivalent conditions hold:
    \begin{enumerate}
      \item[{\rm (a)}] $\lambda(P_R) = \pi_M^{-1}(\varepsilon(N_R))$;
      \item[{\rm (b)}] $\lambda(\ker(\pi_N))=\ker(\pi_M)$;
      \item[{\rm (c)}] $\pi_M$ induces an isomorphism $\coker(\lambda)\to\coker(\varepsilon)$.
    \end{enumerate}
  \item For every pair of projective covers $\pi_N\colon P_R\to N_R$ of $N_R$ and $\pi_M\colon Q_R\to M_R$ of $M_R$ and every commutative diagram \eqref{1'} of right $R$-module morphisms, the following equivalent conditions hold:
    \begin{enumerate}
      \item[{\rm (a')}] $\lambda(P_R) = \pi_M^{-1}(\varepsilon(N_R))$;
      \item[{\rm (b')}] $\lambda(\ker(\pi_N))=\ker(\pi_M)$;
      \item[{\rm (c')}] $\pi_M$ induces an isomorphism $\coker(\lambda)\to\coker(\varepsilon)$.
    \end{enumerate}
  \end{enumerate}
\end{lemma}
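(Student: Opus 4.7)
The approach splits naturally into three pieces: the internal equivalences (a)$\Leftrightarrow$(b)$\Leftrightarrow$(c) and (a')$\Leftrightarrow$(b')$\Leftrightarrow$(c'); the implication (2)$\Rightarrow$(1); and the substantive implication (1)$\Rightarrow$(2). The first is immediate from Lemma~\ref{square}, applied with $A_R = Q_R$ and $B_R = P_R$, since $\pi_N, \pi_M$ are epimorphisms and $\varepsilon$ is a monomorphism. The second is essentially formal: the hypothesis that $N_R$ and $M_R$ have projective covers supplies $\pi_N, \pi_M$; projectivity of $P_R$ together with surjectivity of $\pi_M$ produces some $\lambda$ completing the diagram, and condition~(2) applied to this choice yields~(1).

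The heart of the proof is (1)$\Rightarrow$(2), which I would carry out in two stages. Stage one is a change-of-covers reduction: if $\pi_N'\colon P'_R\to N_R$ and $\pi_M'\colon Q'_R\to M_R$ are any other projective covers with $\lambda'\colon P'_R\to Q'_R$ completing a commutative diagram, the standard uniqueness of projective covers provides isomorphisms $\varphi_P\colon P'_R \to P_R$ and $\varphi_Q\colon Q'_R \to Q_R$ satisfying $\pi_N \varphi_P = \pi_N'$ and $\pi_M \varphi_Q = \pi_M'$. Setting $\tilde\lambda := \varphi_Q \lambda' \varphi_P^{-1}$, one checks that $(\pi_N, \pi_M, \tilde\lambda)$ gives a commutative diagram involving the original covers; the second half of Lemma~\ref{square} then identifies conditions (a'), (b'), (c') for $(\pi_N', \pi_M', \lambda')$ with (a), (b), (c) for $(\pi_N, \pi_M, \tilde\lambda)$. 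Thus I may assume the covers are fixed and reduce the problem to showing independence from the choice of~$\lambda$.

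Stage two is the crux: given $\lambda_1, \lambda_2\colon P_R \to Q_R$ both satisfying $\pi_M \lambda_i = \varepsilon \pi_N$, with (a) holding for $\lambda_1$, I must show (a) holds for $\lambda_2$. Setting $h := \lambda_2 - \lambda_1\colon P_R \to \ker(\pi_M)$, condition (a) for $\lambda_1$ forces $\ker(\pi_M) \subseteq \lambda_1(P_R)$, so projectivity of $P_R$ produces $h'\colon P_R \to P_R$ with $\lambda_1 h' = h$; since $\varepsilon$ is mono, a short chase yields $h'(P_R) \subseteq \ker(\pi_N)$, so $\lambda_2 = \lambda_1 \circ (1 + h')$ with $\pi_N(1 + h') = \pi_N$. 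The main obstacle is then proving that $1 + h'$ is an automorphism of $P_R$: surjectivity follows from $\im(1 + h') + \ker(\pi_N) = P_R$ together with the superfluousness of $\ker(\pi_N)$, while injectivity rests on the observation that projectivity of $P_R$ splits the sequence $0 \to \ker(1 + h') \to P_R \to P_R \to 0$, realizing $\ker(1 + h')$ as a direct summand of $P_R$; since this kernel lies in $\ker(\pi_N)$ and is therefore superfluous, the direct-summand-plus-superfluous combination forces it to vanish. Once $1 + h'$ is invertible, $\lambda_2(P_R) = \lambda_1(P_R) = \pi_M^{-1}(\varepsilon(N_R))$, which is (a) for $\lambda_2$.
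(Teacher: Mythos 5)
Your proof is correct, and its outer skeleton matches the paper's: the internal equivalences come from Lemma~\ref{square}, the implication (2)${}\Rightarrow{}$(1) is formal via projectivity of $P_R$, and the reduction to a fixed pair of covers uses the uniqueness of projective covers together with the second half of Lemma~\ref{square}, exactly as in the paper. Where you genuinely diverge is at the crux, the independence from the choice of $\lambda$. The paper works with condition (b): writing $\lambda-\lambda'=\iota\psi$ with $\psi\colon P_R\to\ker\pi_M$, it observes that $\psi(\ker\pi_N)$ is superfluous \emph{in $\ker\pi_M$} (image of a superfluous submodule), so that $\ker\pi_M=\lambda'(\ker\pi_N)+\psi(\ker\pi_N)$ collapses to $\ker\pi_M=\lambda'(\ker\pi_N)$ in two lines. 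You instead work with condition (a): using $\ker\pi_M\subseteq\lambda_1(P_R)$ and projectivity of $P_R$ you lift the difference to $h'\colon P_R\to\ker\pi_N$ and prove that $1+h'$ is an automorphism (surjectivity from superfluousness of $\ker\pi_N$ in $P_R$, injectivity from the splitting of $0\to\ker(1+h')\to P_R\to P_R\to 0$ plus the fact that a superfluous direct summand vanishes), concluding $\lambda_2=\lambda_1(1+h')$. Your route is longer and uses superfluousness twice rather than once, but it buys a stronger structural statement the paper does not record: any two liftings completing the square differ by precomposition with an automorphism of $P_R$. The paper's argument is the more economical of the two; both are sound.
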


\begin{proof}
  The equivalences (a)${}\Leftrightarrow{}$(b)${}\Leftrightarrow{}$(c) and (a')${}\Leftrightarrow{}$(b')${}\Leftrightarrow{}$(c') have been proved in Lemma \ref*{square}. 
  
  (b)${}\Rightarrow{}$(b'):
  Assume that $\pi_N\colon P_R\to N_R$, $\pi_M\colon Q_R\to M_R$ and $\lambda\colon P_R\to Q_R$ satisfy condition (b), that is, make diagram~(\ref{1'}) commute and $\lambda(\ker(\pi_N))=\ker(\pi_M)$. Let $\pi'_N\colon P'_R\to N_R$ and $\pi'_M\colon Q'_R\to M_R$ be projective covers and $\lambda'\colon P'_R\to Q'_R$ be a morphism that make the diagram corresponding to diagram~(\ref{1'}) commute, that is, such that $\pi'_M\lambda'=\varepsilon\pi'_N$. Projective covers are unique up to isomorphism and, by Lemma \ref*{square}, we may therefore assume $P'_R = P_R$, $Q'_R = Q_R$ and $\pi_M' = \pi_M$, $\pi_N' = \pi_N$. 

   Then $\pi_M(\lambda-\lambda')=\pi_M\lambda-\varepsilon\pi_N=\varepsilon\pi_N-\varepsilon\pi_N=0$, so that $(\lambda-\lambda')(P_R)\subseteq \ker\pi_M$.
   Let $\iota\colon\ker\pi_M\to Q_R$ denote the inclusion.
   Then there exists a morphism $\psi\colon P_R\to\ker\pi_M$ such that $\lambda-\lambda'=\iota\psi$. As images via module morphisms of superfluous submodules are superfluous submodules and $\ker\pi_N$ is a superfluous submodule of $P_R$, it follows that $\psi(\ker\pi_N)$ is a superfluous submodule of $\ker\pi_M$.
   Now $\ker\pi_M=\lambda(\ker\pi_N)=(\lambda'+\iota\psi)(\ker\pi_N)\subseteq\lambda'(\ker\pi_N)+\iota\psi(\ker\pi_N)=\lambda'(\ker\pi_N)+\psi(\ker\pi_N)\subseteq \ker\pi_M$. Thus $\ker\pi_M=\lambda'(\ker\pi_N)+\psi(\ker\pi_N)$. But $\psi(\ker\pi_N)$ is superfluous in $\ker\pi_M$, hence $\ker\pi_M=\lambda'(\ker\pi_N)$, which proves (b').

(b')${}\Rightarrow{}$(b): Let $\pi_N\colon P_R \to N_R$ and $\pi_M\colon Q_R \to M_R$ be projective covers of $N_R$, respectively $M_R$. Since $P_R$ is projective and $\pi_M\colon Q_R \to M$ is an epimorphism, there exists a $\lambda\colon P_R \to Q_R$ such that $\pi_M \lambda = \varepsilon \pi_N$. By (b'), then $\lambda(\ker(\pi_N)) = \ker(\pi_M)$.
\end{proof}

\begin{definition}
  If $N_R\le M_R$ are cyclic right $R$-modules and the equivalent conditions of Theorem \ref{cuo} are satisfied, we say that $N_R$ is an \emph{exact submodule} of $M_R$.
\end{definition}

\begin{cor}
  If $L_R\le M_R\le N_R$ are cyclic right $R$-modules, $M_R$ is exact in $N_R$ and $L_R$ is exact in $M_R$, then $L_R$ is exact in $N_R$.
\end{cor}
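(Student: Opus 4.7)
The plan is to construct a morphism between projective covers of $L_R$ and $N_R$ that witnesses the exactness of $L_R$ in $N_R$, obtained by composing morphisms supplied by the two given exactness hypotheses. Since Lemma~\ref{cuo} establishes that conditions~(1) and~(2) are equivalent, I can freely use~(2) on the inputs (which lets me fix any projective covers I like) and verify~(1) on the output.

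First, I would fix projective covers $\pi_L \colon P_L \to L_R$, $\pi_M \colon P_M \to M_R$, and $\pi_N \colon P_N \to N_R$. By projectivity of $P_L$ there exists a morphism $\lambda_{LM} \colon P_L \to P_M$ with $\pi_M \lambda_{LM} = \varepsilon_{LM}\pi_L$, where $\varepsilon_{LM}\colon L_R \hookrightarrow M_R$ is the inclusion; similarly, projectivity of $P_M$ provides $\lambda_{MN} \colon P_M \to P_N$ with $\pi_N \lambda_{MN} = \varepsilon_{MN}\pi_M$, where $\varepsilon_{MN}\colon M_R \hookrightarrow N_R$ is the inclusion. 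Since $L_R$ is exact in $M_R$ and $M_R$ is exact in $N_R$, condition~(b') of Lemma~\ref{cuo}(2) applies in each case and yields
\[
\lambda_{LM}(\ker \pi_L) = \ker \pi_M \quad \text{and} \quad \lambda_{MN}(\ker \pi_M) = \ker \pi_N.
\]

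Next, I would set $\lambda_{LN} := \lambda_{MN}\lambda_{LM} \colon P_L \to P_N$ and let $\varepsilon_{LN} := \varepsilon_{MN}\varepsilon_{LM}$ denote the inclusion $L_R \hookrightarrow N_R$. A direct check
\[
\pi_N \lambda_{LN} = \pi_N \lambda_{MN} \lambda_{LM} = \varepsilon_{MN} \pi_M \lambda_{LM} = \varepsilon_{MN}\varepsilon_{LM}\pi_L = \varepsilon_{LN}\pi_L
\]
establishes the commutativity of the relevant square, and
\[
\lambda_{LN}(\ker \pi_L) = \lambda_{MN}\bigl(\lambda_{LM}(\ker \pi_L)\bigr) = \lambda_{MN}(\ker \pi_M) = \ker \pi_N
\]
verifies condition~(b) of Lemma~\ref{cuo}(1). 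The existence form~(1) of that lemma then gives that $L_R$ is an exact submodule of $N_R$.

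I do not anticipate a substantive obstacle. The one point worth underlining is that the ``for every'' form~(2) of Lemma~\ref{cuo} lets us use a \emph{single} fixed projective cover $\pi_M \colon P_M \to M_R$ simultaneously in both input relations, after which composition of the two lifts and elementary set-theoretic chasing of kernels finishes the argument.
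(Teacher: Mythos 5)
Your proof is correct and follows essentially the same route as the paper's: compose the two lifts through the projective cover of $M_R$ and chase kernels, concluding via Lemma~\ref{cuo}. The only (cosmetic) difference is that you invoke the ``for every'' form~(2) of Lemma~\ref{cuo} to get both kernel equalities with a single fixed cover of $M_R$, whereas the paper starts from the existential form~(1) with two a priori different covers $Q_R$, $Q_R'$ of $M_R$ and then identifies them using uniqueness of projective covers and Lemma~\ref{square}.
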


\begin{proof}
  Since $L_R$ is exact in $M_R$ and $M_R$ is exact in $N_R$, there exist projective covers $\pi_L\colon P_R \to L_R$, $\pi_M\colon Q_R \to M_R$, $\pi_M'\colon Q_R' \to M_R$ and $\pi_N\colon U_R \to N_R$ and homomorphisms $\lambda\colon P_R \to Q_R$ and $\mu\colon Q_R' \to U_R$ such that $\pi_M\lambda = \pi_L$, $\pi_N\mu = \pi_M'$, $\lambda(\ker(\pi_L)) = \ker(\pi_M)$ and $\mu(\ker(\pi_M')) = \ker(\pi_N)$.

  Since the projective cover of $M_R$ is unique up to isomorphism, we may assume by Lemma \ref*{square} that $Q_R = Q_R'$ and $\pi_M'=\pi_M$ (replacing $\lambda$ accordingly). Then $\pi_N \mu \lambda = \pi_M \lambda = \pi_L$ and $\ker(\pi_N) = \mu(\ker(\pi_M)) = \mu(\lambda(\ker(\pi_L)) = (\mu\lambda)(\ker(\pi_L))$. Therefore $N_R$ is an exact submodule of $M_R$.
\end{proof}

\begin{cor} \label{cor:cp}
If a cyclic module $N_R$ is an exact submodule of a cyclic module $M_R $ and $M_R$ has a projective cover isomorphic to $R_R$, then $M_R/N_R$ is cyclically presented.
\end{cor}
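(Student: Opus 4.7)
The plan is to use condition~(2) of Lemma~\ref{cuo} to realize $M_R/N_R$ as a quotient of $R_R$ by a principal right ideal. Since by hypothesis the projective cover of $M_R$ is isomorphic to $R_R$, I would fix a projective cover of the form $\pi_M\colon R_R\to M_R$, and let $\pi_N\colon P_R\to N_R$ be any projective cover of $N_R$; such a cover exists because the very definition of exactness requires $N_R$ to admit one. Projectivity of $P_R$ then produces a morphism $\lambda\colon P_R\to R_R$ with $\pi_M\lambda=\varepsilon\pi_N$, where $\varepsilon\colon N_R\hookrightarrow M_R$ is the inclusion.

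Since $N_R$ is exact in $M_R$, condition~(c') of Lemma~\ref{cuo}(2) yields an isomorphism $\coker(\lambda)\xrightarrow{\sim}\coker(\varepsilon)=M_R/N_R$, so $M_R/N_R\cong R_R/\lambda(P_R)$. It remains to show that $\lambda(P_R)$ is a principal right ideal of $R$, for which it suffices to prove that $P_R$ itself is cyclic. This is a standard consequence of the superfluity of $\ker\pi_N$: picking $n\in N_R$ with $nR=N_R$ and $x\in P_R$ with $\pi_N(x)=n$, one has $\pi_N(xR)=N_R$, hence $xR+\ker\pi_N=P_R$, and superfluity of $\ker\pi_N$ forces $xR=P_R$. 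Consequently $\lambda(P_R)=\lambda(x)R$ is principal, and setting $a:=\lambda(x)$ gives $M_R/N_R\cong R_R/aR$, which is cyclically presented.

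I do not anticipate a genuine obstacle here: the argument reduces to translating the abstract definition of exactness, via Lemma~\ref{cuo}, into the concrete cokernel data provided by the distinguished projective cover $R_R\to M_R$, together with the elementary observation that the projective cover of a cyclic module is again cyclic.
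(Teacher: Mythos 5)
Your argument is correct and follows essentially the same route as the paper: both invoke the equivalent conditions of Lemma~\ref{cuo} to get $M_R/N_R\cong\coker(\lambda)$ for a map $\lambda$ out of a projective cover of $N_R$ into $R_R$, and then observe that the image of $\lambda$ is a principal right ideal. The only (cosmetic) difference is that the paper identifies the projective cover of $N_R$ with $eR$ for an idempotent $e$ via Lemma~\ref{square}, whereas you show directly that it is cyclic using superfluity of $\ker\pi_N$; both yield $M_R/N_R\cong R/aR$ with $a$ the image of a generator.
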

\begin{proof}
Since $N_R$ is an exact submodule of $M_R$, there exists a commutative diagram
\begin{equation*}
\xymatrix{ P_R \ar[r]^{\lambda} \ar[d]_{\pi_N} &
Q_R \ar[d]^{\pi_M} \\
N_R \ar[r]_{\varepsilon} & M_R}
\end{equation*}
where $\pi_N\colon P_R \to N_R$ and $\pi_M\colon Q_R \to M_R$ are projective covers of $N_R$ and $M_R$ and $\coker(\lambda) \cong \coker(\varepsilon)$. By assumption, there exists an idempotent $e \in R$ such that $P_R \cong eR$ and $Q_R \cong R_R$. By Lemma~\ref*{square}, we may therefore assume $P_R = eR$ and $Q_R = R_R$ (replacing $\pi_M$, $\pi_N$ and $\lambda$ accordingly). Therefore $M_R / N_R = \coker(\varepsilon) \cong \coker(\lambda) = R / eR$. Hence $M_R/N_R$ is cyclically presented.
\end{proof}

The following example shows that if $R$ is not a domain, then even if a non-unit $x \in R$ is not a zero-divisor, the projective cover of $R/xR$ need not be isomorphic to $R_R$.

\begin{example}{\rm 
Let $D$ be a discrete valuation ring and $\pi \in D$ a prime element. The unique maximal ideal of $D$ is $\pi D$. Let $R=M_2(D)$, $x=\begin{bmatrix} 1&0 \\ 0& \pi \end{bmatrix}$ and $e=\begin{bmatrix} 0&0 \\0&1\end{bmatrix}$.

We have
\[
  xR=\begin{bmatrix} D&D \\ \pi D& \pi D \end{bmatrix} \quad\text{and}\quad eR= \begin{bmatrix} 0&0 \\ D&D\end{bmatrix}.
\]
Let $p\colon R_R \to R/xR$ be the canonical projection. We will show that $p|_{eR}:eR \to R/xR$ is a projective cover of $R/xR$. We have $\ker p|_{eR}=xR \cap eR =\begin{bmatrix} 0&0 \\ \pi D& \pi D\end{bmatrix}$. Since $J(R)=M_2(J(D))=\begin{bmatrix}\pi D& \pi D \\ \pi D&\pi D\end{bmatrix}$, it follows that $\ker p|_{eR} = eJ(R)$.
Since $e$ is an idempotent of $R$, $eR$ is projective and $eJ(R)=J(eR)$. In particular, $\ker p|_{eR}$ is superfluous in $eR$. Therefore $eR$ is a projective cover of $R/xR$.

We now show that $eR \not \cong R$. Assume $eR$ is isomorphic to $R$. Then there exists an isomorphism $f\colon R_R \to eR$. Hence $f(1)=\begin{bmatrix} 0&0 \\ c&d \end{bmatrix} \ne 0$.

 Let $b=\begin{bmatrix}{-d}&0 \\ c&0\end{bmatrix}$. Then $b \ne 0$, because $f(1) \ne 0$. But $f(1)b=\begin{bmatrix} 0&0 \\ c&d \end{bmatrix} \begin{bmatrix}{-d}&0 \\ c&0 \end{bmatrix}=\begin{bmatrix} 0&0 \\0&0 \end{bmatrix}$ implies $f(b)=0$. It follows that $b=0$, which contradicts $b \ne 0$. Thus $eR$ is not isomorphic to $R$.}
\end{example}

The next example shows that the condition for the projective cover of $M_R$ to be isomorphic to $R_R$ is necessary in Corollary \ref{cor:cp}.

\begin{example}{\rm 
  Let $R=T_2(\mathbb{Z} /2\mathbb{Z})$ be the ring of all upper triangular $2 \times 2$ matrices with coefficients in $\Z/2\Z$. Since $J(R)$ consists of all strictly upper triangular matrices, $R/J(R) \cong (\mathbb Z / 2\mathbb Z)^2$ is semisimple and obviously idempotents lift modulo $J(R)$. Therefore every finitely generated $R$-module has a projective cover.
Set $$M_R :=\begin{bmatrix} 1&0 \\ 0&0 \end{bmatrix}R =\left\{
\begin{bmatrix} 
0&0 \\ 0&0 \end{bmatrix},
\begin{bmatrix}
0&1 \\ 0&0\end{bmatrix},
\begin{bmatrix}
1&0 \\ 0&0 \end{bmatrix},
\begin{bmatrix}
1&1 \\ 0&0\end{bmatrix}
\right\},$$

$$N_R:=\begin{bmatrix}
0&1 \\ 0&0\end{bmatrix}R
=\left\{ \begin{bmatrix}
0&0 \\ 0&0 \end{bmatrix},
\begin{bmatrix}
0&1 \\ 0&0\end{bmatrix}
\right\},$$

$$M_R/N_R=\left\{ \begin{bmatrix}
0&0 \\ 0&0 \end{bmatrix}+N_R, \begin{bmatrix}
1&0 \\ 0&0 \end{bmatrix}+N_R \right\}.$$
Consider 
\begin{eqnarray*}\phi\colon N_R &\longrightarrow& \begin{bmatrix} 0&0 \\ 0&1 \end{bmatrix}R\\
\begin{bmatrix} 0&c \\ 0&0 \end{bmatrix} &\longmapsto& \begin{bmatrix} 0&0 \\ 0&c \end{bmatrix}\end{eqnarray*}
It is obvious that $\phi$ is an isomorphism. Since $\begin{bmatrix} 0&0 \\ 0&1 \end{bmatrix}$ is an idempotent of $R$, $\begin{bmatrix} 0&0 \\ 0&1 \end{bmatrix}R$ is a projective $R$-module. Hence $N_R$ is a projective $R$-module. On the other hand, $M_R$ is also a projective $R$-module, because $\begin{bmatrix} 1&0 \\ 0&0 \end{bmatrix}$ is an idempotent of $R$. Hence $1_N\colon N_R \to N_R$ and $1_M\colon M_R \to M_R$ are projective covers. This implies that the diagram 
$$\xymatrix{ N_R \ar[r]^{\varepsilon} \ar[d]_{1_N} &
M_R \ar[d]^{1_M} \\
N_R \ar[r]_{\varepsilon} & M_R, }$$ where $\varepsilon(\ker 1_N) = \ker 1_M$, commutes. Therefore $N_R$ is an exact submodule of $M_R$.

Assume $M_R/N_R$ is a cyclically presented module. Then $M_R/N_R$ is isomorphic to $R/xR$, where $x \in R$. Since $\left|M_R/N_R\right|=2$, $\left|xR\right|=4$. We have

$$\begin{bmatrix} 0&0 \\ 0&0 \end{bmatrix}R= \left\{ \begin{bmatrix} 0&0 \\ 0&0 \end{bmatrix}\right\},$$
$$\begin{bmatrix} 0&1 \\ 0&0 \end{bmatrix}R=\left\{ \begin{bmatrix} 0&1 \\ 0&0 \end{bmatrix}
\begin{bmatrix} a&b \\ 0&c \end{bmatrix}=\begin{bmatrix} 0&c \\ 0&0 \end{bmatrix}\right \}=N_R,$$
$$\begin{bmatrix} 1&0 \\ 0&0\end{bmatrix}R=M_R,$$
$$\begin{bmatrix} 1&1 \\ 0&0 \end{bmatrix}R=\left\{ \begin{bmatrix} 1&1 \\ 0&0 \end{bmatrix} \begin{bmatrix} a&b \\ 0&c \end{bmatrix} =\begin{bmatrix} a&{b+c} \\ 0&0 \end{bmatrix} \right\}=M_R,$$
$$\begin{bmatrix} 0&0 \\ 0&1 \end{bmatrix}R= \left\{ \begin{bmatrix} 0&0 \\ 0&1 \end{bmatrix} \begin{bmatrix} a&b \\ 0&c \end{bmatrix} = \begin{bmatrix} 0&0 \\ 0&c \end{bmatrix} \right\},$$
$$\begin{bmatrix} 0&1 \\ 0&1 \end{bmatrix}R = \left\{ \begin{bmatrix} 0&1 \\ 0&1 \end{bmatrix} \begin{bmatrix} a&b \\ 0&c \end{bmatrix} = \begin{bmatrix} 0&c \\ 0&c \end{bmatrix} \right\},$$
$$\begin{bmatrix} 1&0 \\ 0&1 \end{bmatrix}R=R_R,$$
$$\begin{bmatrix} 1&1 \\ 0&1 \end{bmatrix}R=R_R.$$
Thus $xR=M_R$. Hence
$$R/xR=R/M_R=\left\{\begin{bmatrix} 0&0 \\ 0&0\end{bmatrix}+M_R, \begin{bmatrix} 1&0 \\ 0&1\end{bmatrix}+M_R\right\},$$

\begin{eqnarray*}
\ann(M_R/N_R)&=&\left\{ \begin{bmatrix} a&b \\ 0&c \end{bmatrix} \in R\; \Bigg\vert
\begin{bmatrix} 1&0 \\ 0&0 \end{bmatrix} \begin{bmatrix} a&b \\ 0&c\end{bmatrix} \in N_R \right\}\\
&=&\left\{\begin{bmatrix} a&b \\ 0&c\end{bmatrix} \in R\; \Bigg\vert \begin{bmatrix} a&b \\ 0&0 \end{bmatrix} \in N_R\right\}\\
&=& \left\{ \begin{bmatrix} 0&0 \\ 0&0 \end{bmatrix}, \begin{bmatrix} 0&1 \\ 0&0 \end{bmatrix}, \begin{bmatrix} 0&0 \\ 0&1 \end{bmatrix}, \begin{bmatrix} 0&1 \\ 0&1 \end{bmatrix}  \right\},
\end{eqnarray*}
$$\ann(R/xR)=\left\{ \begin{bmatrix} a&b \\ 0&c \end{bmatrix} \in R\; \Bigg\vert \begin{bmatrix} 1&0 \\ 0&1 \end{bmatrix} \begin{bmatrix} a&b \\ 0&c \end{bmatrix} \in xR=M_R \right\}=M_R.$$
Hence $\ann(M_R/N_R) \ne \ann(R/xR)$. On the other hand, we have $\ann(M_R/N_R)=\ann(R/xR)$ since $M_R/N_R$ is isomorphic to $R/xR$. This is a contradiction. Therefore $M_R/N_R$ is not a cyclically presented module.}
\end{example}

\begin{prop}
  Let $R$ be a local domain. Let $N_R, M_R \ne  0$ be cyclically presented right $R$-modules and let $\pi_M\colon R_R \to M_R$ be an epimorphism.
  Then $N_R \subset M_R$ is exact if and only if it is $\pi_M$-exact in the sense of Definition and Lemma \ref{piexact}.
\end{prop}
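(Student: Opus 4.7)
The plan is to exploit the observation that, in a local ring $R$, any epimorphism from $R_R$ onto a non-zero cyclic right $R$-module is automatically a projective cover, and every projective cover of such a module is, up to isomorphism, of this form. This holds because the unique maximal right ideal $J(R)$ is superfluous in $R_R$, so every proper right ideal is superfluous; hence the kernel of any epimorphism $R_R \to P_R$ with $P_R \ne 0$ cyclic is superfluous. Moreover, the only non-zero cyclic projective $R$-module is $R_R$ itself, because $R$ has no non-trivial idempotents. Consequently, the given $\pi_M\colon R_R\to M_R$ and any epimorphism $\pi_N\colon R_R\to N_R$ are projective covers of $M_R$ and $N_R$, respectively.

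For the implication ``$\pi_M$-exact $\Rightarrow$ exact'', I would start from condition (b) of Definition and Lemma \ref{piexact} to obtain a monomorphism $\lambda\colon R_R\to R_R$ and an epimorphism $\pi_N\colon R_R\to N_R$ making the square commute and satisfying $\lambda(\ker(\pi_N))=\ker(\pi_M)$. Since both $\pi_N$ and $\pi_M$ are projective covers by the opening observation, this data directly verifies condition (1) of Lemma \ref{cuo} with $P_R=Q_R=R_R$; thus $N_R$ is exact in $M_R$.

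For the converse, I would fix any epimorphism $\pi_N\colon R_R\to N_R$ (automatically a projective cover) and, using projectivity of $R_R$, lift $\varepsilon\pi_N$ through $\pi_M$ to get a morphism $\lambda\colon R_R\to R_R$ with $\pi_M\lambda=\varepsilon\pi_N$. Since $N_R$ is exact in $M_R$, the equivalent condition (2) of Lemma \ref{cuo} applied to the pair $(\pi_N,\pi_M)$ yields $\lambda(\ker(\pi_N))=\ker(\pi_M)$. The one remaining point is that $\lambda$ must be a monomorphism in order to apply Definition and Lemma \ref{piexact}(b). Since $R$ is a domain, $\lambda$ is multiplication by $\lambda(1)$, and injectivity is equivalent to $\lambda(1)\ne 0$; but $\lambda=0$ would force $\varepsilon\pi_N=0$, hence $\pi_N=0$ (as $\varepsilon$ is a monomorphism), contradicting $N_R\neq 0$.

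The only substantive step is the opening observation that, over a local ring, projective covers of non-zero cyclic modules are precisely the epimorphisms from $R_R$; everything else is bookkeeping matching up the two definitions. The domain hypothesis enters only through the injectivity of $\lambda$ in the converse direction, so I do not anticipate any serious obstacle.
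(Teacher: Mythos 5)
Your proposal is correct and follows essentially the same route as the paper: both directions rest on the observation that over a local ring every epimorphism from $R_R$ onto a non-zero cyclic module is a projective cover, so that Lemma \ref{cuo} and Definition and Lemma \ref{piexact} can be matched up directly, with the domain hypothesis supplying injectivity of the lifted map $\lambda$. The only (immaterial) difference is that in the direction ``exact $\Rightarrow$ $\pi_M$-exact'' the paper verifies condition (a) of Definition and Lemma \ref{piexact} via $\lambda(R_R)=\pi_M^{-1}(N_R)\ne 0$, while you verify the equivalent condition (b) by checking $\lambda$ is a monomorphism.
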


\begin{proof}
  Suppose first $N_R \subset M_R$ exact.
  Let $\pi_N\colon R_R \to N_R$ be any epimorphism. Then $\pi_M$ and $\pi_N$ are necessarily projective covers, because $\ker(\pi_M)$ and $\ker(\pi_N)$ are superfluous.
  Let $\varepsilon\colon N_R \to M_R$ denote the inclusion.
  By projectivity of $R_R$, there exists a $\lambda\colon R_R \to R_R$ such that $\pi_M \lambda = \varepsilon \pi_N$. By condition (a) in Lemma \ref{cuo}, $\lambda(R_R) = \pi_M^{-1}(N_R)$. Since $\pi_M^{-1}(N_R) \ne  0$, it follows that $\pi_M^{-1}(N_R) \cong R_R$ and hence condition (a) in Definition and Lemma \ref{piexact} is satisfied.

  Suppose now that $N_R \subset M_R$ is $\pi_M$-exact. Let $\pi_N\colon R_R \to N_R$ be an epimorphism and $\lambda\colon R_R \to R_R$ a monomorphism satisfying condition (b) of Definition and Lemma \ref{piexact}. Then $\pi_N$ is a projective cover of $N_R$, and condition (b) of Lemma \ref{cuo} is satisfied, implying that $N_R \subset M_R$ is exact.
\end{proof}

The previous proposition, together with the results from the previous section, shows that in the special case of $R$ a local domain and $x \in R$  a non-unit, series of exact submodules of $R/xR$ may be used to study factorizations of $x \in R$ up to insertion of units.

\section{Cokernels of endomorphisms}

Let $M_R$ be a right module over a ring $R$ and let $E:=\End(M_R)$ be its endomorphism ring. Let $s$ be a fixed element of $E$. In this section, we investigate the relation between projective covers $eE \to E/sE$ for an idempotent $e$, induced by the canonical epimorphism $E_E \to E/sE$, and properties of the module $e(M_R)$.
This is of particular interest if we assume that $E/J(E)$ is Von Neumann regular and idempotents can be lifted modulo $J(E)$, as in this case for every non-zero $s \in E$ the module $E/sE$ has a projective cover. For instance, every continuous module $M_R$ has this property \cite[Proposition~3.5 and Corollary~3.9]{SM}, in particular every quasi-injective module has this property, and every module of Goldie dimension one and dual Goldie dimension one has this property \cite[Proposition~2.5]{SM}.

Let $s\colon M_R\to M_R$ be an endomorphism of $M_R$. We can consider the direct summands $M_1$ of $M_R$ such that there exists a direct sum decomposition $M_R=M_1\oplus M_2$ of $M_R$ for some complement $M_2$ of $M_1$ with the property that $\pi_2s\colon M_R\to M_2$ is a split epimorphism. Here $\pi_2\colon M_R\to M_2$ is the canonical projection with kernel $M_1$. Let $\Cal F$ be the set of all such direct summands, that is,
\[
  \begin{split}
    \Cal F:= \{\,M_1\mid\; & M_1\le M_R,\ \text{ there exists }M_2\le M_R\ \text{such that }M_R=M_1\oplus M_2 \\
      & \text{and } \pi_2s\colon M_R\to M_2\ \text{ a split epimorphism}\,\}.
  \end{split}
\]
The set $\Cal F$ can be partially ordered by set inclusion.

It is well known that there is a one-to-one correspondence between the set of all pairs $(M_1,M_2)$ of $R$-submodules of $M_R$ such that $M_R=M_1\oplus M_2$ and the set of all idempotents $e\in E$. If $e\in E$ is an idempotent, the corresponding pair is the pair $(M_1:=e(M_R),M_2:=(1-e)(M_R))$.
If $s \in \End(M_R)$, we always denote by $\varphi\colon E_E \to E/sE$ the canonical epimorphism $\varphi(f) = f + sE$.

\begin{lemma} \label{lemma:surj-equi}
  Let $M_R = M_1 \oplus M_2$, let $\pi_2\colon M_R \to M_2$ be the projection with kernel $M_1$, and let $e \in \End(M_R)$ be the endomorphism corresponding to the pair $(M_1,M_2)$. If $s\colon M_R \to M_R$ is an endomorphism, then $\pi_2 s$ is a split epimorphism if and only if $\varphi|_{eE}\colon eE \to E/sE$ is surjective.
\end{lemma}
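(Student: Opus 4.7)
The plan is to rephrase both conditions as algebraic identities in the endomorphism ring $E$ and then verify their equivalence by a short direct computation.

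First, I would identify the idempotent $1-e \in E$ with the composition $M_R \xrightarrow{\pi_2} M_2 \hookrightarrow M_R$. Under this identification, the condition that $\pi_2 s\colon M_R \to M_2$ is a split epimorphism translates into the existence of some $u \in E$ with $(1-e)\,s\,u = 1-e$. Indeed, if $t\colon M_2 \to M_R$ is a splitting of $\pi_2 s$, then $u := t \pi_2$ works, because $(1-e)stu$ unfolds to $\iota_2 \pi_2 s t \pi_2 = \iota_2 \pi_2 = 1-e$ (where $\iota_2\colon M_2\hookrightarrow M_R$); conversely, given such a $u$, the corestriction of $u|_{M_2}$ to $M_R$ splits $\pi_2 s$. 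On the other side, $\varphi|_{eE}$ is surjective exactly when its image $(eE + sE)/sE$ equals $E/sE$, i.e., when $eE + sE = E$ as right ideals of $E$.

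The task is then reduced to showing $(1-e)\,s\,u = 1-e$ for some $u \in E$ if and only if $eE + sE = E$. For the forward implication, I would start from $(1-e)su = 1-e$ and rewrite
\[
1 = e + (1-e) = e + (1-e)su = e(1 - su) + su,
\]
placing $1$ inside the right ideal $eE + sE$, which therefore equals $E$. For the converse, writing $1 = eg + sh$ with $g,h \in E$ and left-multiplying by $1-e$ gives $1-e = (1-e)sh$, so $u := h$ does the job.

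The only subtlety I anticipate is the first step, namely keeping the distinction between $\pi_2$ (with codomain $M_2$) and $1-e$ (with codomain $M_R$) straight, so that the splitting condition is accurately translated into an identity inside $E$. Once that translation is in place, the argument is purely formal manipulation of the idempotent relation.
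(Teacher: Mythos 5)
Your proof is correct and follows essentially the same route as the paper's: both reduce the statement to the identity $eE+sE=E$, verify the forward direction by exhibiting $1=e+(1-e)su$ with $u$ built from a section of $\pi_2 s$, and obtain the converse by left-multiplying $1=eg+sh$ by $1-e$. The only cosmetic difference is that you isolate the intermediate condition $(1-e)su=1-e$ explicitly, whereas the paper works directly with $\varepsilon_2\pi_2$ and cancels the mono/epi factors.
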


\begin{proof}
  We have to show that $\pi_2s\colon M_R\to M_2$ is a split epimorphism if and only if $eE+sE=E$.
  In order to prove the claim, assume that $\pi_2s\colon M_R\to M_2$ is a split epimorphism, so that there is an $R$-module morphism $f\colon M_2\to M_R$ with $\pi_2sf=1_{M_2}$. Let $\varepsilon_2\colon M_2\to M_R$ be the embedding. Then the right ideal $eE+sE$ of $E$ contains the endomorphism
  \[
    \begin{split}
    e(1_M-sf\pi_2)+s(f\pi_2)&=e+(1_M-e)sf\pi_2=e+\varepsilon_2\pi_2sf\pi_2 \\
                            &=e+\varepsilon_21_{M_2}\pi_2=e+(1_M-e)=1_M,
    \end{split}
  \]
  so that $eE+sE=E$. Conversely, let $e\in E$ be an idempotent with $eE+sE=E$, so that there exist $g,h\in E$ with $1=eg+sh$. Then $(1-e)=(1-e)sh$, so that $(1-e)=(1-e)sh(1-e)$, that is, $\varepsilon_2\pi_2=\varepsilon_2\pi_2sh\varepsilon_2\pi_2$. Since $\varepsilon_2$ is injective and $\pi_2$ is surjective, they can be canceled, so that $1_{M_2}=\pi_2sh\varepsilon_2$. Hence $\pi_2s$ is a split epimorphism, which proves our claim.
\end{proof}

\begin{prop} \label{prop:min-splitepi}
  Let $M_R$ be a right module, and let $E:=\End(M_R)$ be its endomorphism ring.
  Let $s\in E$ and suppose that $E/sE$ has a projective cover. Then
\[
  \begin{split}
    \Cal F:= \{\,M_1\mid\; & M_1\le M_R,\ \text{ there exists }M_2\le M_R\ \text{such that }M_R=M_1\oplus M_2 \\
      & \text{and } \pi_2s\colon M_R\to M_2\ \text{ a split epimorphism}\,\}
  \end{split}
\]
  has minimal elements, and all minimal elements of $\Cal F$ are isomorphic $R$-submodules of $M_R$.
\end{prop}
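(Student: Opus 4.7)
The plan is to translate membership in $\Cal F$ into a condition on idempotents of $E$ via Lemma~\ref{lemma:surj-equi}, and then exploit the existence of a projective cover of $E/sE$. By Lemma~\ref{lemma:surj-equi}, $\Cal F$ is in bijection with the set of idempotents $e \in E$ satisfying $eE + sE = E$ (equivalently, $\varphi|_{eE}\colon eE \to E/sE$ is surjective), via $e \mapsto e(M_R)$. Since $E/sE$ has a projective cover, \cite[Lemma~17.17]{AF2} (as recalled at the start of Section~4) provides an idempotent $e_0 \in E$ such that $\varphi|_{e_0 E}\colon e_0 E \to E/sE$ is a projective cover; in particular $e_0 E \cap sE = \ker(\varphi|_{e_0 E})$ is superfluous in $e_0 E$.

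First I would check that $e_0(M_R)$ is a minimal element of $\Cal F$, which already establishes existence of minimal elements. So let $e$ be any idempotent with $eE + sE = E$ and $e(M_R) \subseteq e_0(M_R)$. The inclusion of submodules is equivalent to $e_0 e = e$, so $e = e_0 e \in e_0 E$ and $eE \subseteq e_0 E$. Intersecting the identity $E = eE + sE$ with $e_0 E$ yields $e_0 E = eE + (e_0 E \cap sE)$; superfluity of $e_0 E \cap sE$ in $e_0 E$ then forces $eE = e_0 E$, and hence $e(M_R) = e_0(M_R)$.

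Next I would show that every minimal element of $\Cal F$ is isomorphic as an $R$-module to $e_0(M_R)$, which completes the proof. Given any idempotent $e$ with $eE + sE = E$, projectivity of $eE$ produces a lift $f\colon eE \to e_0 E$ with $\varphi|_{e_0 E}\circ f = \varphi|_{eE}$; superfluity of $\ker(\varphi|_{e_0 E})$ forces $f$ to be surjective, and projectivity of $e_0 E$ then makes $f$ split. The splitting gives an orthogonal idempotent decomposition $e = e_1 + e_2$ with $eE = e_1 E \oplus e_2 E$ and $f|_{e_1 E}\colon e_1 E \to e_0 E$ an isomorphism. The relation $f(x) - x \in sE$ for all $x \in eE$ then yields $\varphi|_{e_1 E} = \varphi|_{e_0 E}\circ f|_{e_1 E}$, still surjective, so $e_1(M_R) \in \Cal F$, and clearly $e_1(M_R) \subseteq e(M_R)$. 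If $e(M_R)$ is minimal, this forces $e_1(M_R) = e(M_R)$, whence $e_2(M_R) = 0$ and $e = e_1$. Thus $eE \cong e_0 E$ as right $E$-modules, and the standard correspondence between equivalent idempotents and isomorphic direct summands of $M_R$ yields $e(M_R) \cong e_0(M_R)$ as right $R$-modules. The main obstacle lies in this last step: verifying that the splitting produces an idempotent $e_1$ with $e_1 E + sE = E$ requires carefully tracking the congruence $f(x) \equiv x$ modulo $sE$, and translating $e_1 E \cong e_0 E$ into the $R$-module isomorphism $e_1(M_R) \cong e_0(M_R)$ relies on exhibiting elements $a \in e_1 E e_0$ and $b \in e_0 E e_1$ with $ab = e_1$ and $ba = e_0$, which then provide the mutually inverse $R$-linear maps by left multiplication.
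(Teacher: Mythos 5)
Your proposal is correct and follows essentially the same route as the paper: both translate membership in $\Cal F$ into idempotents $e$ with $eE+sE=E$ via Lemma~\ref{lemma:surj-equi}, obtain minimality of $e_0(M_R)$ from superfluity of $e_0E\cap sE$ in $e_0E$, and identify an arbitrary minimal element with $e_0(M_R)$ by splitting off a copy of the projective cover inside $eE$ and invoking minimality. The only cosmetic difference is the final transfer of $eE\cong e_0E$ to $e(M_R)\cong e_0(M_R)$, which the paper does by applying $-\otimes_E M_R$ and you do by exhibiting the equivalence of idempotents explicitly.
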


\begin{proof}
From the previous lemma, it follows that there is  a one-to-one correspondence between the set $\Cal F'$ of all pairs $(M_1,M_2)$ of $R$-submodules of $M_R$ such that $M_R=M_1\oplus M_2$ and $\pi_2s\colon M_R\to M_2$ is a split epimorphism and the set of all idempotents $e\in E$ for which the canonical mapping $eE\to  E_E/sE$, $x\in eE\mapsto x+sE$, is surjective. In order to prove that $\Cal F$ has minimal elements, it suffices to show that if the canonical mapping $eE\to  E_E/sE$ is a projective cover, then $e(M_R)$ is a minimal element of $\Cal F$. Let $e\in E$ be such that $eE\to  E_E/sE$ is a projective cover, and let $M'_1\in\Cal F$ be such that $M'_1\subseteq e(M_R)$.  Let $e'\in E$ be an idempotent such that $M'_1=e'(M_R)$ and $\pi'_2s\colon M_R\to (1-e')(M_R)$ is a split epimorphism. Then $M'_1=e'(M_R)\subseteq e(M_R)$, so that $ee'=e'$. Thus $e'E=ee'E\subseteq eE$. If $\varphi|_{eE}\colon eE\to E/sE$ is the projective cover, $\varphi|_{e'E}\colon e'E\to E/sE$ denotes the canonical epimorphism and $\varepsilon\colon e'E\to eE$ is the embedding, it follows that $\varphi|_{eE}\varepsilon=\varphi|_{e'E}$. Now $\varphi|_{eE}$ is a superfluous epimorphism and $\varphi|_{eE}\varepsilon=\varphi|_{e'E}$ is onto, so that $\varepsilon$ is onto, that is, $e'E=eE$. Thus $e=e'f$ for some $f\in E$, so that $e(M_R)\subseteq e'(M_R)=M'_1$ and $M'_1=e(M_R)$. It follows that $e(M_R)$ is a minimal element of $\Cal F$.

Now let $M''_1$ be any other minimal element of $\Cal F$, and let $e''$ be an idempotent element of $E$ with $\pi''_2s\colon M_R\to (1-e'')(M_R)$ a split epimorphism. Then the canonical projection $e''E\to E/sE$ is an epimorphism. As the canonical projection $\varphi|_{eE}\colon eE\to E/sE$ is the projective cover, there is a direct sum decomposition $e''E=P'_E\oplus P''_E$ with the canonical projection $P'_E\to E/sE$ a projective cover. Thus $P'_E=p'E$ for some idempotent $p'$ of $E$ with $p'E+sE=E$, so that $p'(M_R)\in \Cal F$. Now $e''E\supseteq P'_E=p'E$ implies that $p'=e''g$ for some $g\in E$, so that $p'(M_R)\subseteq e''(M_R)=M''_1$. By the minimality of $M''_1$ in $\Cal F$, it follows that $p'(M_R)=e''(M_R)$, so that $M''_1=e''(M_R)=p'(M_R)\cong p'E\otimes_EM_R=P'\otimes_EM_R\cong eE\otimes_EM_R\cong e(M_R)$. Thus every minimal element of $\Cal F$ is isomorphic to $e(M_R)$.\end{proof}

We conclude the paper by considering quasi-projective modules.
Let $M_R$ and $N_R$ be right $R$-modules. Recall that $M_R$ is \emph{$N_R$-projective} if for every epimorphism $f\colon N_R \to L_R$ and every homomorphism $g\colon M_R \to L_R$ there exists a homomorphism $h\colon M_R \to N_R$ such that $g=fh$.
Equivalently, for every epimorphism $f\colon N_R \to L_R$, the induced homomorphism $f_*\colon \Hom_R(M_R,N_R) \to \Hom_R(M_R,L_R)$ is surjective. If $M_R$ is $N_R$-projective and $K_R \le N_R$, then $M_R$ is also $K_R$-projective \cite[Proposition 16.12(1)]{AF2}. A right $R$-module $M_R$ is \emph{quasi-projective} if it is $M_R$-projective. Trivially, projective modules and semisimple modules are quasi-projective.

Let $M_R$ be quasi-projective, $E := \End_R(M_R)$ and suppose $s \in E$. In the following, we relate projective covers of the $R$-module $M_R/s(M_R)$ and the cyclically presented $E$-module $E/sE$.

\begin{lemma} \label{lemma:endo}
Let $M_R$ be a quasi-projective right R-module, $E$ the endomorphism ring of $M_R$ and let $s \in E$. Let $\pi$ be the canonical epimorphism of $M_R$ onto $M_R/s(M_R)$ and $\varphi$ the canonical epimorphism of $E_E$ onto $E/sE$.
\begin{enumerate}
  \item For every $g \in E$, $\pi|_{g(M_R)}$ is surjective if and only if $\varphi|_{gE}$ is surjective.
\item For every $g\in E$, $gE$ is a direct summand of $E_E$ if and only if $g(M_R)$ is a direct summand of $M_R$.
\item Let $e,e'$ be idempotents in $E$. Then $e(M_R) \cong e'(M_R)$ if and only if $eE \cong e'E$.
\item Let $e \in E$ be idempotent. Then $\ker(\pi|_{e(M_R)})$ is superfluous if and only if $\ker(\varphi|_{eE})$ is superfluous.
\end{enumerate}
\end{lemma}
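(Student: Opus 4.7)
The plan is to exploit the quasi-projectivity of $M_R$ systematically: since $M_R$ is $M_R$-projective and hence $N_R$-projective for every submodule $N_R\le M_R$ by \cite[Proposition 16.12(1)]{AF2}, any $R$-linear map out of $M_R$ can be lifted through an epimorphism between submodules of $M_R$. This is the bridge between submodules of $e(M_R)$ and right ideals of $eE$. I will address the four assertions in turn, treating (1)--(3) as warm-ups for the subtler part (4).

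For (1), one direction is immediate: if $1_M=gh+sk$ then evaluating at $m\in M_R$ gives $m\in g(M_R)+s(M_R)$. Conversely, from $g(M_R)+s(M_R)=M_R$ the map $\pi g\colon M_R\to M_R/s(M_R)$ is surjective, so quasi-projectivity produces $h\in E$ with $\pi gh=\pi$, i.e., $(gh-1_M)(M_R)\subseteq s(M_R)$; lifting $gh-1_M$ through $s\colon M_R\to s(M_R)$ then yields $k\in E$ with $sk=gh-1_M$. Part (2) follows by observing that if $g(M_R)=e(M_R)$ for an idempotent $e$ then $eg=g$ automatically, while quasi-projectivity lifts $e\colon M_R\to e(M_R)=g(M_R)$ through $g$ to produce $gh=e$; conversely, $gE=eE$ gives $g(M_R)=e(M_R)$ directly. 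Part (3) is a formal consequence of the standard identifications $\Hom_R(e(M_R),e'(M_R))\cong e'Ee\cong\Hom_E(eE,e'E)$: both isomorphism statements amount to the existence of $a\in e'Ee$ and $b\in eEe'$ with $ab=e'$ and $ba=e$.

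The main obstacle is (4). For the direction $\ker(\varphi|_{eE})$ superfluous $\Rightarrow$ $\ker(\pi|_{e(M_R)})$ superfluous, given $L\le e(M_R)$ with $L+(e(M_R)\cap s(M_R))=e(M_R)$ I would form $I:=\{f\in E\mid f(M_R)\subseteq L\}\subseteq eE$ and prove that $I+(eE\cap sE)=eE$. The key step uses quasi-projectivity twice: for $f\in eE$, first lift $M_R\xrightarrow{f}e(M_R)\to e(M_R)/L$ through the induced epimorphism $e(M_R)\cap s(M_R)\to e(M_R)/L$ to obtain $h\colon M_R\to e(M_R)\cap s(M_R)$, then lift $h$ through $s\colon M_R\to s(M_R)$ to place $h$ in $sE$; this yields $h\in eE\cap sE$ with $f-h\in I$. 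The hypothesis then forces $I=eE$, whence $L=e(M_R)$.

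For the converse, given $J\le eE$ with $J+(eE\cap sE)=eE$, I would first reduce to the principal case: writing $e=j+h$ with $j\in J$ and $h\in eE\cap sE$, one checks that already $jE+(eE\cap sE)=eE$. Evaluating on $M_R$ gives $j(M_R)+(e(M_R)\cap s(M_R))=e(M_R)$, so the superfluousness hypothesis forces $j(M_R)=e(M_R)$; a final application of quasi-projectivity, lifting $e\colon M_R\to j(M_R)$ through the epimorphism $j\colon M_R\to j(M_R)$, produces $g\in E$ with $jg=e$ and hence $e\in jE\subseteq J$. The delicate feature of (4) is precisely this reduction to a principal right ideal, which is what converts a set-level hypothesis about superfluous submodules of $e(M_R)$ into the element-level conclusion $e\in J$ needed in $E$.
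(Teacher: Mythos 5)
Your proposal is correct, and for parts (1), (2) and the direction ``$\ker(\pi|_{e(M_R)})$ superfluous $\Rightarrow$ $\ker(\varphi|_{eE})$ superfluous'' of (4) it coincides with the paper's argument (same two liftings through epimorphisms onto submodules of $M_R$, same reduction $e=j+h$, $j(M_R)+(e(M_R)\cap s(M_R))=e(M_R)$, then lift $e$ through $j$). The only genuine variations are these. For (3) you invoke the standard identifications $\Hom_R(e(M_R),e'(M_R))\cong e'Ee\cong\Hom_E(eE,e'E)$, where the paper instead writes out the mutually inverse maps explicitly; your version is the same content, is valid for an arbitrary module, and makes clear that quasi-projectivity is not needed here. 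For the remaining direction of (4) the paper, given $N_R$ with $N_R+\ker(\pi|_{e(M_R)})=M_R$, produces a \emph{single} endomorphism $g$ with $\pi g=\pi e$ and $g(M_R)\subseteq N_R$, checks $g-e\in eE\cap sE$, and deduces $gE=eE$; you instead form the right ideal $I=\{f\in E\mid f(M_R)\subseteq L\}$ and show $I+(eE\cap sE)=eE$ by lifting each $f\in eE$ through the epimorphism $e(M_R)\cap s(M_R)\to e(M_R)/L$ (then through $s$). Both routes use quasi-projectivity twice; yours targets the superfluous kernel rather than the complement $L$, which packages the argument a bit more symmetrically and avoids the paper's slightly awkward bookkeeping about whether the complement is taken inside $e(M_R)$ or inside $M_R$. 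I see no gap in any of the four parts.
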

\begin{proof}
  (1) (${}\Leftarrow{}$) Since $\varphi|_{gE}$ is surjective, there exists $h$ in $E$ such that $gh+sE=1_M+sE$. Hence there exists $h'$ in $E$ such that $gh=1_M+sh'$. For all $m \in M_R$ we have $\pi(m) = \pi(1_M(m)) = \pi(g(h(m))$, whence $\pi|_{g(M_R)}$ is surjective.
    
     (${}\Rightarrow{}$) 
     Since $M_R$ is quasi-projective and $\pi g\colon M_R \to M_R$ is an epimorphism, there exists $h\colon M_R \to M_R$ such that $\pi gh=\pi$.
Therefore $(gh-1_M)(M_R) \subset s(M_R)$.
Since $s\colon M_R \to s(M_R)$ is an epimorphism, quasi-projectivity of $M_R$ implies that there exists $h' \in E$ such that $gh-1_M=sh'$.
This implies that $\varphi(gh)=1_M+sE$. Therefore $\varphi|_{gE}$ is surjective.

\smallskip
(2) (${}\Rightarrow{}$) If $gE$ is a direct summand of $E$, there exists an idempotent $e$ in $E$ such that $gE=eE$. Hence there exist $h,h'$ in $E$ such that $g=eh$ and $e=gh'$. This implies that $g(M_R)=e(M_R)$. On the other hand, $e(M_R)$ is a direct summand of $M_R$ since $e$ is an idempotent of $E$. Therefore $g(M_R)$ is a direct summand of $M_R$.

  (${}\Leftarrow{}$) If $g(M_R)$ is a direct summand of $E$, there exists an idempotent $e$ in $E$ such that $g(M_R)=e(M_R)$. Hence $eg=g$. Therefore $gE \subset eE$.
  Since $g\colon M_R \to e(M_R)$ is an epimorphism and $M_R$ is quasi-projective, there exists $h\colon M_R \to M_R$ such that $e=gh$.
This implies that $eE \subset gE$. Hence $eE=gE$.

\smallskip
(3) (${}\Leftarrow{}$) 
  Since $eE \cong e'E$, there exists an isomorphism $\Gamma\colon eE \to e'E$. Consider the two following homomorphisms $f\colon e(M_R) \to e'(M_R)$ defined via $f(m)=e'x(m)$ where $e'x=\Gamma(e)$ and $g\colon e'(M_R) \to e(M_R)$ defined via $g(m)=ey(m)$ where $ey=\Gamma ^{-1}(e')$. It suffices to show that $fg=1_{e'(M_R)}$ and $gf=1_{e(M_R)}$. For $m \in e'(M_R)$,  $fg(m)= f(ey(m))=e'xey(m)=e'xy(m)=\Gamma(e)y(m)=\Gamma(ey)(m)=\Gamma(\Gamma^{-1}(e'))(m)=e'(m)=m$, it follows that $fg=1_{e'(M_R)}$. By an argument analogous to the previous one, we get $gf=1_{e(M_R)}$.

  (${}\Rightarrow{}$) Since $e(M_R) \cong e'(M_R)$, there exists an isomorphism $h\colon e(M_R) \to e'(M_R)$. Consider the two following homomorphisms $\theta\colon eE \to e'E$ defined via $\theta(ex)=e'hex$, and $\theta'\colon e'E \to eE$ defined via $\theta'(e'x)=eh^{-1}e'x$. It suffices to show that $\theta \theta'=1_{e'E}$ and $\theta' \theta =1_{eE}$. Since $\theta \theta'(e'x)(m)=\theta(eh^{-1}e'x)(m)=e'heh^{-1}e'x(m)=e'he(h^{-1}(e'x(m)))=e'h(h^{-1}(e'x(m)))=e'e'(x(m))=e'(x(m))$, it follows that $\theta \theta' (e'x)= e'x$. Hence $\theta \theta' =1_{e'E}$. By an argument analogous to the previous one, we get $\theta' \theta =1_{eE}$.

  \smallskip
  (4) (${}\Rightarrow{}$) Let $K_E$ be a submodule of $eE$ such that $K_E+\ker(\varphi|_{eE})=eE$. It suffices to show that $K_E=eE$. There exists $h \in \ker(\varphi|_{eE})=eE \cap sE$ and $k \in K_E$ such that $e=k+h$. Hence $e(M_R)=k(M_R)+h(M_R)$. This implies that $e(M_R)=k(M_R)+ \big(e(M_R) \cap s(M_R)\big)$. Since $e(M_R) \cap s(M_R)$ is superfluous in $e(M_R)$, then $e(M_R)=k(M_R)$.
  Since $k\colon M_R \to e(M_R)$ is an epimorphism and $M_R$ is quasi-projective, there exists $h'$ in $E$ such that $e=kh'$.
This implies that $e \in K_E$. Therefore $K_E=eE$.

(${}\Leftarrow{}$) Let $N_R$ be a submodule of $M_R$ such that $N_R+\ker(\pi|_{e(M_R)})=M_R$. Hence $\pi|_{N_R}$ is surjective. It suffices to show that $N_R=M_R$.
Since $M_R$ is quasi-projective and $N_R$ is a submodule of $M_R$, it follows that $M_R$ is also $N_R$-projective. Therefore the induced homomorphism $(\pi|_{N_R})_*\colon \Hom(M_R,N_R)\to \Hom(M_R,M_R/s(M_R))$ is surjective and hence there exists $g\colon M_R \to N_R$ such that $\pi g=\pi e$.
Again by quasi-projectivity of $M_R$, there exists $h\colon M_R \to M_R$ such that $g-e=sh$.
Since $g(M_R) \subset N_R \subset e(M_R)$, for every $x \in M_R$ there exists $y \in M_R$ such that $g(x)=e(y)$. We have $eg(x)=e(e(y))=e(y)=g(x)$. Thus $eg=g$. Since $g-e=eg-e=sh$, $eg-e \in eE$ and $sh \in sE$, it follows that $g-e \in eE \cap sE$. From $e=g-(g-e)$, we have $eE=gE+(g-e)E$. Hence $eE=gE+(eE \cap sE)$. Since $eE \cap sE= \ker \varphi|_{eE}$ is superfluous, $eE=gE$. Therefore $e(M_R)=g(M_R) \subset N_R$. Thus $N_R=e(M_R)$. \qedhere
\end{proof}
\begin{cor}
Let $M_R$ be a projective right $R$-module and $E$ the endomorphism ring of $M_R$. Let $s \in E$ and let $\pi$ be the canonical epimorphism from $M_R$ to $M_R/s(M_R)$ and $\varphi$ the canonical epimorphism from $E$ to $E/sE$.
Then $\pi|_{e(M_R)}$ is a projective cover of $M_R/s(M_R)$ if and only if $\varphi|_{eE}$ is a projective cover of $E/sE$.
\end{cor}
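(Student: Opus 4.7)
The plan is to assemble the corollary directly from Lemma \ref{lemma:endo}, by unpacking the definition of projective cover into its three constituent conditions: projectivity of the domain, surjectivity of the map, and superfluity of the kernel. Since $M_R$ is projective (hence quasi-projective) and $e \in E$ is idempotent, all parts of Lemma \ref{lemma:endo} are at my disposal with $g = e$.

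First I would verify that both candidate domains are projective. Because $e$ is an idempotent of $E$, $eE$ is a direct summand of $E_E$ and is therefore a projective right $E$-module. On the other side, $e(M_R)$ is the image of the idempotent endomorphism $e$ of $M_R$, hence a direct summand of $M_R$ with complement $(1-e)(M_R)$; since $M_R$ is assumed projective here, $e(M_R)$ is a projective right $R$-module. This is precisely where the stronger hypothesis on $M_R$ (projective, not merely quasi-projective) is used.

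Next, the equivalence of surjectivity of $\pi|_{e(M_R)}$ and surjectivity of $\varphi|_{eE}$ is exactly Lemma \ref{lemma:endo}(1) applied to $g = e$. Similarly, the equivalence ``$\ker(\pi|_{e(M_R)})$ is superfluous in $e(M_R)$ if and only if $\ker(\varphi|_{eE})$ is superfluous in $eE$'' is Lemma \ref{lemma:endo}(4). Combining these two bi-implications with the projectivity of $eE$ and of $e(M_R)$ established in the previous step, one side is a projective cover exactly when the other is.

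I do not anticipate any substantial obstacle; the technical content is entirely packaged inside Lemma \ref{lemma:endo}, and the corollary is essentially a translation of its parts (1) and (4) into the language of projective covers. The only subtlety worth flagging in the write-up is the role of the hypothesis that $M_R$ itself be projective (rather than merely quasi-projective), since this is what makes the very notion of ``$\pi|_{e(M_R)}$ is a projective cover'' sensible.
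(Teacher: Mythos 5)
Your proposal is correct and follows essentially the same route as the paper: both reduce the corollary to parts (1) and (4) of Lemma \ref{lemma:endo} after observing that projectivity of $M_R$ makes $e(M_R)$ (and, automatically, $eE$) projective. Your write-up merely spells out the surjectivity step via Lemma \ref{lemma:endo}(1), which the paper leaves implicit in the phrase ``follows from the previous lemma.''
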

\begin{proof}
  Since $M_R$ is projective, so is $e(M_R)$. Hence $\pi|_{e(M_R)}$ is a projective cover if and only if $\ker(\pi|_{e(M_R)})$ is superfluous. Therefore the corollary follows from the previous lemma.
\end{proof}

\begin{prop}\label{vhlpp}
  Let $M_R$ be a quasi-projective right $R$-module, let $s \in E=\End(M_R)$ and let $\pi\colon M_R \to M_R/s(M_R)$ be the canonical epimorphism. Suppose that $E/sE$ has a projective cover.

  Consider $\cE := \{\, N_R \le M_R \mid \text{$\pi|_{N_R}$ is surjective} \,\}$ and $\cE_{\oplus} := \{\, N_R \in \cE \mid \text{$N_R$ is a direct summand of $M_R$}\}$, both partially ordered by set inclusion.
  Then $\cE_{\oplus}$ has minimal elements, any two minimal elements of $\cE_{\oplus}$ are isomorphic as right $R$-modules and any minimal element of $\cE_{\oplus}$ is minimal in $\cE$.
\end{prop}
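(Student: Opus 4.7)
The plan is to reduce the Proposition to Proposition~\ref{prop:min-splitepi} by identifying $\cE_{\oplus}$ with the poset $\Cal F$ defined there. A direct summand $N_R$ of $M_R$ corresponds bijectively to an idempotent $e\in E$ via $N_R=e(M_R)$, and by Lemma~\ref{lemma:surj-equi} the condition ``there exists a complement $M_2$ with $\pi_2 s$ split epi'' translates to $\varphi|_{eE}$ being surjective. On the other hand, since $M_R$ is quasi-projective, Lemma~\ref{lemma:endo}(1) says $\pi|_{e(M_R)}$ is surjective if and only if $\varphi|_{eE}$ is surjective. Chaining these, $N_R\in \cE_{\oplus}$ if and only if the idempotent $e$ with $N_R=e(M_R)$ lies in the set associated to $\Cal F$. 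Thus $\cE_{\oplus}=\Cal F$ as subposets of the lattice of direct summands of $M_R$. Since $E/sE$ has a projective cover by hypothesis, Proposition~\ref{prop:min-splitepi} delivers at once that $\cE_{\oplus}$ has minimal elements and that any two minimal elements are isomorphic as right $R$-modules.

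For the third (and only new) assertion, let $N_R$ be a minimal element of $\cE_{\oplus}$. The argument in the proof of Proposition~\ref{prop:min-splitepi} shows that $N_R$ can be written as $e(M_R)$ for some idempotent $e\in E$ for which $\varphi|_{eE}\colon eE\to E/sE$ is actually a projective cover, not merely an epimorphism; in particular $\ker(\varphi|_{eE})=eE\cap sE$ is superfluous in $eE$. Then by Lemma~\ref{lemma:endo}(4), $\ker(\pi|_{e(M_R)})=e(M_R)\cap s(M_R)$ is superfluous in $e(M_R)=N_R$.

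Now suppose $N_R'\in\cE$ with $N_R'\subseteq N_R$. For each $x\in N_R$, surjectivity of $\pi|_{N_R'}$ yields $n'\in N_R'$ with $\pi(n')=\pi(x)$, so that $x-n'\in s(M_R)$; since also $x-n'\in N_R$, this gives $x-n'\in N_R\cap s(M_R)=\ker(\pi|_{N_R})$. Hence $N_R=N_R'+\ker(\pi|_{N_R})$, and the superfluousness just established forces $N_R'=N_R$. This shows $N_R$ is minimal in $\cE$.

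The only delicate point is the choice of idempotent representative in the last paragraph: minimality of $N_R$ in $\cE_{\oplus}$ does not immediately say that every idempotent $e$ realising $N_R=e(M_R)$ makes $\varphi|_{eE}$ a projective cover, but the splitting argument in Proposition~\ref{prop:min-splitepi} (decompose the given $e''E$ as a projective-cover summand plus a complement, then invoke the minimality of $N_R$) guarantees that at least one such idempotent exists, which is all we need to invoke Lemma~\ref{lemma:endo}(4).
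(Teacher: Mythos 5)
Your proof is correct and takes essentially the same route as the paper's: identify $\cE_{\oplus}$ with the poset $\Cal F$ of Proposition~\ref{prop:min-splitepi} via Lemma~\ref{lemma:surj-equi} and Lemma~\ref{lemma:endo}(1), invoke that proposition for the existence and uniqueness-up-to-isomorphism of minimal elements, and then use Lemma~\ref{lemma:endo}(4) to get superfluousness of $\ker(\pi|_{N_R})$ and hence minimality in $\cE$. Your closing remark about selecting an idempotent representative $e$ for which $\varphi|_{eE}$ is actually a projective cover addresses a point the paper states without comment, and you resolve it correctly.
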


\begin{proof}
  Let $N_R \le M_R$ be a direct summand of $M_R$, let $e \in E$ be an idempotent with $e(M_R)=N_R$ and let $\pi_2\colon M_R \to \ker(e)$ be the canonical projection corresponding to the direct sum decomposition $M_R = N_R \oplus \ker(e)$.
  Lemma \ref{lemma:endo}(1) implies that $\pi|_{N_R}\colon N_R \to M_R/s(M_R)$ is surjective if and only if $\varphi|_{eE}\colon eE \to E/sE$ is surjective.
  By Lemma \ref{lemma:surj-equi} this is the case if and only if $\pi_2 s$ is a split epimorphism.
  This shows that $\cE_{\oplus} = \cF$, where the latter is defined as in Proposition \ref{prop:min-splitepi}.
  The claims about $\cE_{\oplus}$ therefore follow from the proposition.

  It remains to show that the minimal elements of $\cE_{\oplus}$ are minimal in $\cE$. Let $N_R \in \cE_{\oplus}$ be minimal, and let $e\colon M_R \to N_R$ be an idempotent with $e(M_R) = N_R$. From the proof of Proposition \ref{prop:min-splitepi}, we see that $eE \to E/sE$ is a projective cover. Therefore Lemma \ref{lemma:endo}(4) implies that $\ker(\pi|_{N_R})$ is superfluous.
  Therefore, if $L_R \le N_R$ and $\pi|_{L_R}$ is surjective, we have $L_R + \ker(\pi|_{N_R}) = N_R$ and hence $L_R = N_R$, showing that $N_R$ is minimal in $\cE$.
\end{proof}

\subsection*{Acknowledgements} We are grateful to the anonymous referee for pointing out and correcting a mistake in an earlier version of the proof of Lemma \ref{lemma:endo}(4).

\end{document}